\def\ov#1{{\overline{#1}}}
\def\wt#1{{\widetilde{#1}}}  
\newcommand{\MV}{\operatorname{MV}}  
\newcommand{\Conv}{\operatorname{Conv}}  
\newcommand{\newton}{\operatorname{N}}  
\newcommand{\ord}{\operatorname{ord}}  
\newcommand{\num}{{\operatorname{num}}}  
\newcommand{\den}{{\operatorname{den}}}  
\newcommand{\h}{{\operatorname{h}}}  
\newcommand{\Card}{\operatorname{Card}}  
\newcommand{\Id}{\operatorname{Id}}  
\newcommand{\Char}{\operatorname{char}}  
\newcommand{\MI}{\operatorname{MI}}  
\newcommand{\sign}{\operatorname{sign}}  
\newcommand{\Res}{\operatorname{Res}}  
\newcommand{\init}{\operatorname{init}}
\newcommand{\Div}{\operatorname{Div}}  
\newcommand{\End}{\operatorname{End}}  
\newcommand{\Aut}{\operatorname{Aut}}  
\def \F{\mathbb{F}}  
\def \N{\mathbb{N}}  
\def \P{\mathbb{P}}  
\def \R{\mathbb{R}}  
\def \T{\mathbb{T}}  
\def \Z{\mathbb{Z}}  
\def \K{\mathbb{K}}  
\def \A{\mathbb{A}}
\def\tJ {{L}}
\def\cP {{\mathcal P}}
\def\cT {{\mathcal T}}
\newcommand{\bfu}{{\boldsymbol{u}}}
\numberwithin{equation}{section}  
\theoremstyle{definition}  
\newtheorem{definition}{Definition}[section]  
\newtheorem{remark}[definition]{Remark}  
\newtheorem{example}[definition]{Example}
\theoremstyle{plain}  
\newtheorem{lemma}[definition]{Lemma}  
\newtheorem{proposition}[definition]{Proposition}  
\newtheorem{theorem}[definition]{Theorem}  
\newtheorem{corollary}[definition]{Corollary}
\newenvironment{Proof}[1]{{\it #1}}{\hfill\mbox{$\Box$}}  
\begin{document}  
\title{The Newton polygon of a rational plane curve}  
  
\author[Carlos D'Andrea]{Carlos D'Andrea}  
\address{Departament d'{\`A}lgebra i Geometria, Universitat de Barcelona.
Gran Via 585, 08007 Barcelona, Spain.}  
\email{cdandrea@ub.edu}  
\urladdr{\url{http://atlas.mat.ub.es/personals/dandrea/}}
  
\author[Mart{\'\i}n~Sombra]{Mart{\'\i}n~Sombra}  
\address{ICREA \&
Departament d'{\`A}lgebra i Geometria, Universitat de Barcelona.
Gran Via 585, 08007 Barcelona, Spain}
\email{sombra@ub.edu}
\urladdr{\url{http://atlas.mat.ub.es/personals/sombra/}}
  
\date{\today}  
\subjclass[2000]{Primary 14H50; Secondary 14Q05, 14C17, 52B20.}  
\keywords{Rational plane curve, parametrization, implicit equation, Newton polygon, mixed integral.}  
\thanks{D'Andrea was partially supported 
by the research project~MTM2007--67493. 
Sombra was partially supported 
by the research projects~MTM2006-­14234 and MTM2009-14163-C02-01
 and by the visiting scholarship AGAUR 2007 BE-1 00036.}  
\begin{abstract}  
The Newton polygon of the implicit equation of a  
rational plane curve is explicitly determined by the multiplicities of any of  
its parametrizations.  
We give an intersection-theoretical proof of this fact based on a  
refinement of the Ku\v{s}nirenko-Bern\v{s}tein theorem.  
We apply this result to the determination of the Newton polygon of a curve  
parameterized by generic Laurent polynomials or by generic rational functions, with explicit genericity  
conditions.  
We also show that the variety of rational curves with given Newton polygon is unirational and we compute  its dimension.  
As a consequence, we  obtain  
that {\em any} convex lattice polygon with positive area is the  
Newton polygon of a rational plane curve.  
\end{abstract}  
  
\maketitle  
  
\overfullrule=0.3mm  
\parindent=0cm  
  
\vspace{-6mm}  
  
\section{Introduction}

Rational plane curves can be represented either by a parametrization or  
by their implicit equation.  
In the present text we focus on the  
interplay between both representations.  
Specifically, we study  
the Newton polygon of the implicit equation of a rational  
plane curve, which turns out to be determined by the multiplicities of any  
given parametrization.  
We give a proof of this fact by translating the problem into the counting of the number of solution of a certain system of polynomial equations,  
which we solve by applying  
a refinement of the Ku\v{s}nirenko-Bern\v{s}tein theorem.  
As a consequence  of this result,  
we determine the Newton polygon of a curve  
parameterized by generic Laurent polynomials or by generic rational functions, with explicit genericity  
conditions.  
We also study the variety of rational curves with given Newton polygon; we show that this variety  is unirational and we compute  its dimension.  
  
\bigskip  
Let $\K$ be an algebraically closed field and set $  
\T^n:=(\K^\times)^n$ for the $n$-dimensional algebraic torus.  
For a plane curve $C\subset \T^2$ we consider its defining equation  
$E_C\in  
\K[{x}^{\pm1},{y}^{\pm1}]$, which is an irreducible Laurent polynomial well-defined up to a monomial factor.  
We will be mostly interested in the {\it Newton polygon of $C$}  
$$  
\newton(C)\subset\R^2,  
$$  
defined as the convex hull of the exponents in the monomial  
expansion of $E_C$.  
This is a lattice convex polygon in the sense that its vertexes lie  
in the lattice $\Z^2$; it is well-defined up  
to a translation.  
Note that $\newton(C)$ might reduce to a segment.  
  
On the other hand, let $f,\,g \in\K(t)^\times$  
be rational functions which are not simultaneously constant  
and consider the  map  
$$
\rho:\T^1\dashrightarrow\T^2 \quad, \quad t \mapsto\left(f(t),g(t)\right).  
$$
The Zariski closure  
$\overline{\rho\left(\T^1\right)}$ of its image   is a rational curve in $\T^2$.  
We denote by $\deg(\rho)\in\N^\times$ the {\it degree of $\rho$},  
that is the cardinality of the fiber of $\rho$ above a  
generic point in its image.  
The notion of Newton polygon can be naturally extended to any effective  
Weil divisor $Z\in \Div(\T^2)$: given an equation  $E_Z\in \K[x^{\pm1},y^{\pm1}]$ for this divisor, the Newton polygon of $Z$  is just defined as the convex hull of the exponents of the monomials in~$E_Z$.  
By the definition of the push-forward cycle we then have  
$$  
\rho^*(\T^1)=\deg(\rho)\left[\ov{\rho(\T^1)}\right]  
$$  
  
Let $\P^1$ denote the projective line over $\K$, then for each  
$v\in\P^1$ we consider the {\it multiplicity of $\rho$ at $v$} defined as  
$$  
\ord_v(\rho):=\left(\ord_v(f),\ord_v(g)\right)\in\Z^2  
$$  
where $\ord_v(h)$ denotes the order of vanishing of $h$ at the point $v$.  
Note that  
$  
\ord_v(\rho)=(0,0)  
$ for all but a finite number of $v$'s and  
that the introduced family of vectors satisfies  
the {balancing condition} $$\sum_{v\in \P^1} \ord_v(\rho)=(0,0)  
$$  
because the sum of the order of the  
zeros and poles of a rational function is zero.  
  
\smallskip  
Given a family $B$ of vectors of $\Z^2$ which are zero except for a finite number of them and satisfy the balancing condition, we denote by  
$P(B)$ the lattice convex polygon  
constructed by rotating $-90$ degrees the non-zero vectors in $B$  
and concatenating them following their directions  
counterclockwise.  
Equivalently, $P(B)$ is characterized modulo translations by the properties that  
its inner normal directions are  
those spanned by the non-zero vectors in~$B$ and  
that for each such inner normal direction, the length of the corresponding edge  
of $P(B)$  
equals the sum of the lengths of the vectors in $B$ in that direction.  
  
\smallskip  
  
The following result gives the Newton polygon of a rational plane curve  
in terms of the multiplicities of any given parameterizations.  
  
\begin{theorem}\label{mt}  
Let $\rho:\T^1\dashrightarrow  \T^2$ be a non-constant rational map, then  
$$  
\newton(\rho^*(\T^1))=\deg(\rho) \newton\big(\ov{\rho(\T^1)}\big) = P\big((\ord_v(\rho))_{v\in \P^1}\big).  
$$  
\end{theorem}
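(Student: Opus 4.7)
The plan is to prove the theorem by translating the statement about Newton polygons into an intersection-theoretic computation on a toric compactification of $\T^2$. The first equality $\newton(\rho^*(\T^1)) = \deg(\rho)\,\newton(\ov{\rho(\T^1)})$ is essentially formal: since $\rho^*(\T^1) = \deg(\rho)[\ov{\rho(\T^1)}]$, any defining equation of the left-hand divisor is the $\deg(\rho)$-th power of a defining equation of $\ov{\rho(\T^1)}$, and raising a Laurent polynomial to a positive integer power dilates its Newton polygon by that factor. The real content is therefore the second equality.

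Let $\Sigma$ be the complete fan in $\R^2$ whose rays are the half-lines spanned by the non-zero vectors $\ord_v(\rho)$, refined if needed so that the associated toric surface $X_\Sigma$ is smooth and projective. Since $\ord_v(\rho)$ is non-zero for only finitely many $v$ and the balancing condition holds, $\rho$ extends to a morphism $\wt\rho:\P^1 \to X_\Sigma$ that sends each $v$ with $\ord_v(\rho) = k_v u_{\tau_v}$ (where $u_{\tau_v}$ is the primitive generator of a ray $\tau_v$ and $k_v > 0$) into the toric divisor $D_{\tau_v}$. For each ray $\tau$ with primitive generator $u_\tau$ I would compute $\wt\rho^* D_\tau$ by comparing, for $m\in\Z^2$, the two expressions for the divisor of $\chi^m$: the toric identity $\Div_{X_\Sigma}(\chi^m) = \sum_\tau \langle m, u_\tau\rangle D_\tau$ on the one hand, and $\Div_{\P^1}(\wt\rho^*\chi^m) = \Div(f^{m_1}g^{m_2}) = \sum_v \langle m, \ord_v(\rho)\rangle [v]$ on the other. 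Letting $m$ run over a basis of $\Z^2$ one identifies
$$
\wt\rho^*D_\tau \;=\; \sum_{v\,:\ \ord_v(\rho)=k_v u_\tau,\ k_v>0} k_v\,[v],
$$
so that $\deg(\wt\rho^*D_\tau)$ is exactly the edge length that $P((\ord_v(\rho))_v)$ assigns to the inner normal direction $u_\tau$.

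The next step is the projection formula: $\deg(\wt\rho^*D_\tau) = \wt\rho_*[\P^1]\cdot D_\tau = \deg(\rho)\,[\ov C]\cdot D_\tau$, where $\ov C\subset X_\Sigma$ is the closure of $\ov{\rho(\T^1)}$. Combined with the toric formula saying that $[\ov C]\cdot D_\tau$ equals the lattice length of the edge of $\newton(\ov{\rho(\T^1)})$ with inner normal $u_\tau$, this shows that $\deg(\rho)\,\newton(\ov{\rho(\T^1)})$ and $P((\ord_v(\rho))_v)$ have the same inner normals and the same edge lengths. Since a lattice polygon is determined up to translation by these data, the second equality follows.

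The main obstacle, and the place where genuinely new work is needed, is this last toric formula relating the intersection number $[\ov C]\cdot D_\tau$ to an edge length of the Newton polygon, for the closure of an \emph{arbitrary} effective curve on $\T^2$: this is precisely the refinement of the Ku\v{s}nirenko--Bern\v{s}tein theorem advertised in the abstract, extending the classical statement (for generic sections of line bundles) to any curve. Once it is available, the construction of $\wt\rho$, the local identification of $\wt\rho^*D_\tau$, and the application of the projection formula are routine bookkeeping.
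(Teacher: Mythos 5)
Your outline is essentially correct, but it is a genuinely different proof from the paper's --- it is, in substance, the toric/tropical argument of Dickenstein--Feichtner--Sturmfels and Sturmfels--Tevelev that Remark~\ref{bernardo} credits, whereas the paper's explicit goal is to give an alternative to that route. The paper never compactifies $\T^2$: it evaluates the support function of $\newton(C)$ by introducing the auxiliary curves $C(\sigma)=\ov{\pi(G(\sigma))}$ with $G(\sigma)=V(x^{\sigma_1}-f(t),\,y^{\sigma_2}-g(t))$, shows that $h(\newton(C);(\sigma_1,\sigma_2))+h(\newton(C);(-\sigma_1,0))+h(\newton(C);(0,-\sigma_2))$ equals $\deg(C(\sigma))$ (Proposition~\ref{contador}), counts the intersection of $C(\sigma)$ with a generic line via the Philippon--Sombra $v$-adic refinement of the Ku\v{s}nirenko--Bern\v{s}tein estimate (verifying that every relevant initial system is unsolvable so the mixed-integral bound is exact), computes those mixed integrals in closed form (Proposition~\ref{MI}), and reassembles the polygon as a Minkowski sum of right triangles $R_v$ (Proposition~\ref{Q-R} together with Lemma~\ref{gradoparcial}). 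Your route buys conceptual transparency and bypasses the mixed-integral machinery; the paper's route stays inside $\T^1\times\T^2$ and illustrates the sharpness of the refined Bern\v{s}tein-type bound. Two points in your write-up need attention. First, for the identity between $[\ov{C}]\cdot D_\tau$ and the lattice length of the edge of $\newton(\ov{\rho(\T^1)})$ with inner normal $u_\tau$, the fan $\Sigma$ must also refine the normal fan of $\newton(\ov{\rho(\T^1)})$ --- otherwise the closure may pass through torus-fixed points and the edge-length formula fails; this is harmless (take a common smooth refinement), but those rays are not known a priori, so it should be said. Second, that identity is \emph{not} the refinement of Ku\v{s}nirenko--Bern\v{s}tein advertised in the abstract: it follows from the standard linear equivalence $\ov{C}\sim\sum_{\tau'}h\big(\newton(\ov{C});-u_{\tau'}\big)D_{\tau'}$ on a smooth complete toric surface, which holds for an arbitrary effective divisor with no genericity hypothesis. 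So the step you single out as the main obstacle is actually routine toric intersection theory, while the Philippon--Sombra refinement (a statement about $v$-adic Newton polytopes of systems over $\K(t)$) plays no role whatsoever in your approach.
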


\smallskip  
This result can be found in the work of A. Dickenstein, E.-M. Feichtner, B. Sturmfels and J. Tevelev~\cite{Tev07,DFS07,ST07},  
{\it see}~Remark~\ref{bernardo}.  
We illustrate it  with an example.  
  
\begin{example}\label{example:1}  
Consider the parametrization $\rho=(f,g)$ with  
$$  
f(t)=\frac{1}{t(t-1)} \quad, \quad  
 g(t)= \frac{t^2-5t+2}{t}.  
$$  
We have $\ord_0(\rho)= (-1,-1)$, $\ord_1(\rho)= (-1,0)$,  
$\ord_\infty(\rho)= (2,-1)$ and $\ord_{v_i}(\rho)= (0,1)$  
for each of the two zeros $v_1,v_2\in \T^1$  
of $t^2-5t+2$,  
while $\ord_v(\rho)=(0,0)$ for $v\ne 0,1,\infty, v_1,v_2$.  
Figure~\ref{fig:1} below shows these vectors and the polygon $\deg(\rho)\newton(\ov{\rho(\T^1)})$ obtained by rotating $-90$ degrees these vectors and  adequately  
concatenating them.  
\begin{figure}[htbp]
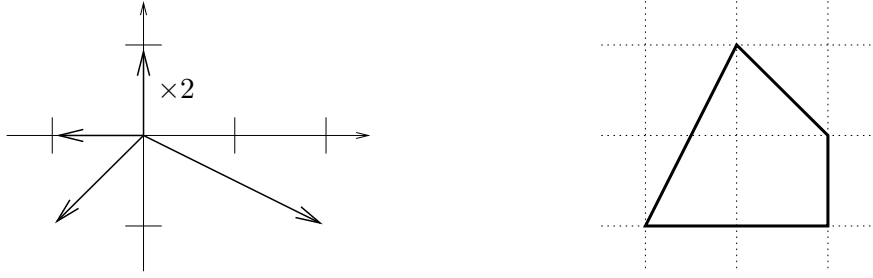
  
\input newton1.pstex_t  
\vspace{0mm}\caption{  
The multiplicities of $\rho$ and the Newton polygon of $\ov{\rho^*(\T^1)}$}\label{fig:1}  
\end{figure}  
Note that the resulting polygon is \textit{non-contractible} in the sense  
that it is not a translate of a non-trivial  
integer multiple of any other lattice polygon.  
This implies that $\deg(\rho)=1$ and so this is the Newton polygon of the curve $\ov{\rho(\T^1)}$.  
  
We can verify the obtained results by contrasting them with the actual  
equation of the curve:  
$$  
E_{\ov{\rho(\T^1)}} (x,y)=  
1-16x-4x^2-9xy-2x^2y -xy^2.  
$$  
\end{example}  
  
\medskip  
The computation of the Newton polygon  can always  
be done with no need of accessing  
the zeros and poles of $f$ and $g$. Indeed, suppose  
that $\F$ is a subfield of $\K$ and $f,g\in\F(t)$, then the  
polygon of $\newton(\rho^*(\T^1))$  
can be obtained from partial factorizations of the form  
$$  
f(t)= \alpha \prod_{p\in \cP} p(t)^{d_p}  
\quad,\quad  
g(t)= \beta \prod_{p\in \cP} p(t)^{e_p}  
$$  
for a finite set $\cP\subset \F[t]$ of pairwise coprime polynomials, $d_p,e_p\in \N$  
and $\alpha,\beta\in \F^\times$.  
This principle is already illustrated in the example above and explained for the general case in Lemma~\ref{practical}.  
Such factorizations can be computed  
with $\gcd$ operations only,  
by applying for instance  
the algorithm in~\cite[Lem.~6.2]{PS07}.  
It simplifies the computation of the Newton polygon in concrete examples,  
as it allows to perform  it  
within the field of definition of $\rho$.  
  
\smallskip  
In the present text we present an alternative proof of Theorem~\ref{mt}  
based on the  
refinement of the Ku\v{s}nirenko-Bern\v{s}tein estimate due to P.~Philippon and the second author~\cite{PS07a,PS07}.  
Instead of  
 explicitly dealing with the Newton polygon  we study its  
 support function  
$  
h(\newton(\rho^*(\T^1));\cdot):\R^2\to \R$.  
Set $p:=\Char(\K)$, we show that  
for  
$\sigma_1,\sigma_2\in \Z\setminus p  \Z$,  
the quantity  
$$  
h(\newton(\rho^*(\T^1)); (\sigma_1,\sigma_2))  
+h(\newton(\rho^*(\T^1)); (-\sigma_1,0))  
+h(\newton(\rho^*(\T^1)); (0,-\sigma_2))  
$$  
equals the number of solutions  
of the system of equations  
$$  
\ell_0+\ell_1\,x+\ell_2\,y=  
x^{\sigma_1}-f(t)=0  
=  
y^{\sigma_2}-g(t)=0  
$$  
for generic $\ell_0,\ell_1,\ell_2\in\K$, {\it see}  
Proposition~\ref{contador}.  
We solve this counting problem by applying the Philippon-Sombra estimate, which turns out to be exact  
for the present application.

\bigskip  
The study of the Newton polytope of a parameterized hypersurface  
is currently receiving a lot of attention.  
Part of the motivation is computational: {\it a priori}  
knowledge of  the Newton polytope  
restricts the possible monomials appearing in the implicit equation and thus reduces its computation to  
Numerical Linear Algebra.  
Reciprocally,  
there are algorithms which take as input  
a multivariate Laurent polynomial and profit  
from its  
monomial structure  
in order to decide more efficiently if the corresponding  
hypersurface admits a parametrization, and to find one when it does  
have~\cite{BS07}.  
  
The problem of computing the Newton polytope of a rational hypersurface  
was first posed in \cite{SY94} for  
generic Laurent polynomial parametrizations.  
The first results were obtained by I.~Emiris and I.~Kotsireas, who computed the Newton polygon and then the implicit equation of  
some specific parametrizations by  
determining the  
extremal monomials of a specialized resultant~\cite{EK03,EK05}.  
In~\cite{EKP10}, the case of rationally parameterized plane curves
with generic coefficients is 
treated by applying sparse elimination theory. 
Their main results are contained in our Corollaries~\ref{genericLaurent},
\ref{genericrationalequaldenom} and~\ref{genericrational} below.

\par\smallskip  
Motivation for this problem comes also from tropical geometry, as the  
data in the polytope of an effective divisor of $\T^n$ is equivalent  
to the abstract tropical variety associated with the divisor.  
In this direction,  
Dickenstein, Feichtner, Sturmfels and  Tevelev  
have  
determined the abstract tropical variety associated to  
a hypersurface parameterized by products of linear forms,  
which includes the case of curves~\cite{Tev07,DFS07,ST07}.  
Besides,  
Sturmfels, Tevelev and J.~Yu have computed the abstract tropical variety  
of a hypersurface parameterized by generic Laurent polynomials in any number of variables~\cite{STY07,ST07,SY07}.  
  
From another direction, A.~Esterov and A. Khovanski\u\i \  
found an important connection with combinatorics,  
as they  
showed  
that the Newton polytope of the projection of a generic complete intersection  
is isomorphic to the {mixed fiber polytope} of the Newton polytopes associated  
to the input data~\cite{EKho07}.  
  
\bigskip  
As an application of Theorem \ref{mt} above, we  
we  
recover the Newton polygon of the generic Laurent polynomial  
parametrization of dimension~$1$ obtained in~\cite{STY07} but this time  
with explicit genericity  
conditions. Proofs of the following statements can be found in Section \ref{degree}.  
  
\begin{corollary}[Generic Laurent Polynomials]\label{genericLaurent}  
Let $ D\ge d$ and $E\ge e$ and  
consider the parametrization $\rho=(p,q)$ where  
$$  
p(t)=\alpha_dt^d+\cdots+\alpha_Dt^D \quad, \quad  
q(t)=\beta_et^e+\cdots+\beta_Et^E \quad \in \K[t^{\pm1}]  
$$  
such that $\alpha_d,\alpha_D, \beta_e,\beta_E\ne 0$, then  
$$  
\deg(\rho)\newton\big(\ov{\rho(\T^1)}\big)  
= P\big((D-d,0),(0,E-e), (-D,-E),(d,e)\big)  
$$  
if and only if $\gcd(p,q)=1$.  
If moreover the vectors $(D-d,0),(0,E-e), (d,e)$ are not collinear, then  
$\deg(\rho)=1$ for generic $p,q$.  
\end{corollary}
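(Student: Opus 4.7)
The plan is to apply Theorem~\ref{mt} by reading off $\ord_v(\rho)\in\Z^2$ at every $v\in\P^1$. First I would write $p(t)=t^d\tilde p(t)$ and $q(t)=t^e\tilde q(t)$ with $\tilde p,\tilde q\in\K[t]$ of respective degrees $D-d,E-e$ and nonvanishing at $0$; this gives $\ord_0(\rho)=(d,e)$, $\ord_\infty(\rho)=(-D,-E)$, and $\ord_v(\rho)=(\ord_v(\tilde p),\ord_v(\tilde q))$ for $v\in\T^1$.

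For the equivalence I would argue as follows. If $\gcd(p,q)=1$, equivalently $\gcd(\tilde p,\tilde q)=1$ in $\K[t]$, then at each $v\in\T^1$ at most one of $\ord_v(\tilde p),\ord_v(\tilde q)$ is positive, so each nonzero $\ord_v(\rho)$ with $v\in\T^1$ is a positive multiple of $(1,0)$ or $(0,1)$; collecting by direction yields aggregate vectors $(D-d,0)$ and $(0,E-e)$, and Theorem~\ref{mt} produces the desired identity. For the converse, pick a common root $v_0\in\T^1$ of $p$ and $q$: then $\ord_{v_0}(\rho)=(a,b)$ with $a,b>0$, so $\sum_{v\in\T^1,\,\ord_v(q)=0}\ord_v(p)<D-d$, and the $(1,0)$-edge of $P((\ord_v(\rho))_{v\in\P^1})$ is strictly shorter than the $(1,0)$-edge of $P((D-d,0),(0,E-e),(-D,-E),(d,e))$, forcing the polygons to differ.

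For the generic statement I would invoke upper semicontinuity of $\deg(\rho)$ on the irreducible parameter space $\{\alpha_d\alpha_D\beta_e\beta_E\ne 0\}$: it suffices to exhibit one $(p,q)$ with $\deg(\rho)=1$. The non-collinearity of $(D-d,0),(0,E-e),(d,e)$ rules out the cases where $p,q$ are both monomials (in which $\deg(\rho)=\gcd(d,e)$) and where $p$ or $q$ is a nonzero constant (whose image is a coordinate line), hence forces $D>d$ or $E>e$. If $\deg(\rho)=k\ge 2$, then by L\"uroth $\rho=\rho'\circ\phi$ with $\deg\phi=k$ and $p,q\in\K(\phi)\subset\K(t)$; this imposes strong structural conditions on $p$ and $q$---for instance $\phi(t)=t^k$ forces $p,q$ to have supports in $k\Z$, incompatible with $D>d$ for $k\ge 2$, and other degree-$k$ rational functions $\phi$ similarly confine $(p,q)$ to proper closed loci in the parameter space. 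A generic $(p,q)$, say with all middle coefficients nonzero and in general position, avoids every such locus and satisfies $\deg(\rho)=1$.

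The hard part will be the second statement: the polygon identity of Part~1 only yields the divisibility $\deg(\rho)\mid\gcd(D-d,E-e,\gcd(D,E),\gcd(d,e))$, which can be strict (e.g.\ $d=e=2$, $D=E=4$ gives the bound $2$ while $\deg(\rho)=1$ generically). Upgrading from this divisibility to the equality $\deg(\rho)=1$ requires the L\"uroth-and-semicontinuity argument above, and the main technical work is enumerating the potential factorizations $\rho=\rho'\circ\phi$ and verifying that each cuts out a proper closed subset of the parameter space in every non-collinear subcase.
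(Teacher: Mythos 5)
Your treatment of the polygon identity is essentially the paper's: the paper reduces Corollary~\ref{genericLaurent} to the rational-function corollaries and there factors $p=t^d\tilde p$, $q=t^e\tilde q$, reads off $\ord_0(\rho)=(d,e)$, $\ord_\infty(\rho)=(-D,-E)$, aggregates the finite roots by direction via Lemma~\ref{practical}, and proves the converse exactly as you do, by observing that a common root of $\tilde p$ and $\tilde q$ strips length from the $(1,0)$- and $(0,1)$-edges. That half of your argument is sound.

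The second half has a genuine gap. You correctly state that, since $\deg(\rho)=1$ is an open condition, it suffices to exhibit a single pair $(p,q)$ with the prescribed supports and $\deg(\rho)=1$ --- but you never exhibit one. Instead you switch to enumerating Lüroth factorizations $\rho=\rho'\circ\phi$ and assert that each $\phi$ confines $(p,q)$ to a proper closed locus which a ``generic'' pair avoids. This does not close the argument: the degree-$k$ maps $\phi$ form a positive-dimensional family, so knowing that each individual $\phi$ cuts out a proper closed subset does not show that their union is contained in one; you would need a dimension count on the incidence variety $\{(\phi,P,Q)\}$, which you acknowledge you have not carried out. (And if you are willing to rely on openness of $\deg(\rho)=1$, the enumeration is superfluous anyway --- only the witness is needed.) The paper supplies the witness via Lemma~\ref{t-v} and Proposition~\ref{grado generico}: specialize to $p(t)=\alpha t^d(t-v)^{D-d}$, $q(t)=\beta t^e(t-w)^{E-e}$; non-collinearity gives two linearly independent multiplicity vectors $(m_i,n_i)$, $(m_j,n_j)$, and then the monomial combinations $f^{n_j}g^{-m_j}=(t-v_i)^{\Delta}F(t)$, $f^{-n_i}g^{m_i}=(t-v_j)^{\Delta}G(t)$ are shown (Lemma~\ref{t-v}, by checking that $(t-v_j)^{\Delta}G(t)$ separates the generic fiber of the first coordinate, combined with Lemma~\ref{gradoparcial}) to give a birational map for generic $v,w$. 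Incorporating that specialization-plus-separation argument is what your proposal is missing.
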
  
  
\medskip  
For parametrizations with generic rational functions with the same
denominator we find that $\newton(C)$ has at most five edges.  
  
\begin{corollary}[Generic Rational Functions with the Same  
Denominator]\label{genericrationalequaldenom}  
Let $ D\ge d$, $E\ge e$, $F\ge 0$ and consider the parametrization $\displaystyle \rho=\Big(\frac{p}{r},\frac{q}{r}\Big)\in \K(t)^2$ where  
$$  
p(t)=\alpha_dt^d+\cdots+\alpha_Dt^D \quad, \quad  
q(t)=\beta_et^e+\cdots+\beta_Et^E  
\quad, \quad  
r(t)=\gamma_0+\cdots+\gamma_Ft^F  
$$  
such that $\alpha_d,\alpha_D, \beta_e,\beta_E, \gamma_0,\gamma_F\ne 0$, then  
$$  
\deg(\rho)\newton\big(\ov{\rho(\T^1)}\big)  
= P\big((D-d,0),(0,E-e), (F-D,F-E),(d,e), (-F,-F)\big)  
$$  
if and only if $p,q,r$ are pairwise coprime.  
If moreover the vectors $(D-d,0),(0,E-e), (d,e),(F,F)$ are not collinear, then  
 $\deg(\rho)=1$ for generic $p,q,r$.  
\end{corollary}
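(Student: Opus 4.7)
The plan is to apply Theorem~\ref{mt} after directly computing the multiplicities of $\rho$. Since $r(0) = \gamma_0 \ne 0$, one has $\ord_0(\rho) = (d, e)$; using $\ord_\infty(h) = -\deg(h)$ for polynomials $h \in \K[t]$, one obtains $\ord_\infty(\rho) = (F-D, F-E)$; and for $v \in \T^1 \setminus \{0\}$, $\ord_v(\rho) = (\ord_v p - \ord_v r, \ord_v q - \ord_v r)$.

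For the forward direction of the equivalence, if $p, q, r$ are pairwise coprime then each $v \in \T^1 \setminus \{0\}$ is a root of at most one of $p, q, r$, so $\ord_v(\rho)$ takes one of the three \emph{pure} forms $(k, 0)$, $(0, \ell)$, or $(-m, -m)$ according to which factor vanishes at $v$. Summing these contributions by type yields $(D-d, 0)$, $(0, E-e)$, and $(-F, -F)$ respectively, where the last equality uses that $\gamma_0 \ne 0$ forces all $F$ roots of $r$ to lie in $\T^1 \setminus \{0\}$. Theorem~\ref{mt} then produces the claimed identity with $P_0 := P((D-d, 0), (0, E-e), (F-D, F-E), (d, e), (-F, -F))$. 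For the converse I would argue by contraposition: if some pair of $p, q, r$ shares a root $v \in \T^1 \setminus \{0\}$, then $\ord_v(\rho)$ is \emph{mixed} (not of the three pure types), and a short case analysis on which pair (or triple) shares the root shows that the corresponding polygon $P((\ord_v(\rho))_v)$ differs from $P_0$, either by introducing a new primitive direction or by shortening the lattice length of an existing pure edge.

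For the genericity statement, the non-collinearity of $(D-d,0)$, $(0,E-e)$, $(d,e)$, $(F,F)$ ensures that $P_0$ has positive area, and pairwise coprimality is a Zariski open condition in the coefficient space, so the identity $\deg(\rho)\newton(\ov{\rho(\T^1)}) = P_0$ holds on a dense open set. Since $\deg(\rho)$ is upper semicontinuous, it then suffices to exhibit one $(p, q, r)$ with $\deg(\rho) = 1$. I would do this by considering the auxiliary bivariate polynomials $A(t_1, t_2) := p(t_1)r(t_2) - p(t_2)r(t_1)$ and $B(t_1, t_2) := q(t_1)r(t_2) - q(t_2)r(t_1)$, both divisible by $t_1 - t_2$, and verifying for a generic choice of coefficients that their quotients by $t_1 - t_2$ have no common zero in $(\T^1 \setminus \{r = 0\})^2$; this means $\rho(t_1) = \rho(t_2)$ forces $t_1 = t_2$ and hence $\deg(\rho) = 1$.

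The main obstacle is the genericity statement: while the multiplicity computation and the converse case analysis are essentially bookkeeping, producing an explicit triple with $\deg(\rho) = 1$ requires either a concrete construction or a dimension count on the subvariety where $A/(t_1-t_2)$ and $B/(t_1-t_2)$ share a zero. A secondary subtlety in the converse arises in the degenerate case where the mixed vector from a shared root happens to be parallel to $(d, e)$ or $(F-D, F-E)$; here one must check that the pure edge lengths still strictly decrease, forcing the polygon to change.
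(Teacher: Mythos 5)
Your treatment of the equivalence is essentially the paper's. You compute the multiplicities $\ord_v(\rho)$ directly and feed them into Theorem~\ref{mt}; the paper factors $f=t^d\,\wt p(t)\,r(t)^{-1}$, $g=t^e\,\wt q(t)\,r(t)^{-1}$ with $\wt p(0),\wt q(0)\ne0$ and invokes Lemma~\ref{practical}, but the computation is the same. Your converse by contraposition also matches the paper's argument, which is most cleanly phrased edge-by-edge: the total contribution to the direction $(1,0)$ coming from $\T^1$ is at most $D-d$, with equality precisely when $\wt p$ is coprime to both $\wt q$ and $r$, and symmetrically for $(0,1)$; this disposes of your worry about mixed vectors accidentally parallel to $(d,e)$ or $(F-D,F-E)$, since those two contributions are fixed by the hypotheses $\alpha_d,\alpha_D,\beta_e,\beta_E,\gamma_0,\gamma_F\ne0$. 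One bookkeeping point you should make explicit: coprimality has to be read in $\K[t^{\pm1}]$ (equivalently, as a condition on common roots in $\T^1$), since for $d,e>0$ the factor $t$ divides both $p$ and $q$ without affecting the polygon.

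The genuine gap is in the last assertion. Reducing, by openness of the condition $\deg(\rho)=1$, to exhibiting a single birational triple is fine, but your proposed certificate --- that the quotients of $A=p(t_1)r(t_2)-p(t_2)r(t_1)$ and $B=q(t_1)r(t_2)-q(t_2)r(t_1)$ by $t_1-t_2$ have no common zero for generic coefficients --- is merely a restatement of generic birationality, and you give no way to verify it; you acknowledge this yourself. This is exactly where the non-collinearity hypothesis must enter (if all multiplicity vectors are collinear, $f=\alpha h^m$, $g=\beta h^n$ and $\deg(\rho)=\eta(h)>1$ generically), and it is where the paper invests its work: Lemma~\ref{t-v} shows that $t\mapsto(p(t),(t-v)^cq(t))$ is birational for generic $v$, because $(t-v)^cq(t)$ separates the $\eta(p)$ points of a generic fiber of $p$; Proposition~\ref{grado generico} bootstraps this, via the auxiliary functions $f^{n_j}g^{-m_j}$ and $f^{-n_i}g^{m_i}$, to any parametrization $\big(\alpha\prod_i(t-v_i)^{m_i},\beta\prod_i(t-v_i)^{n_i}\big)$ having two linearly independent exponent vectors; and the corollary then follows by specializing $p=t^d(t-v_2)^{D-d}$, $q=t^e(t-v_3)^{E-e}$, $r=(t-v_4)^F$, translating the variable $t$ when the independent pair of vectors involves $(d,e)$ so that the corresponding root can be made generic. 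Without some argument of this kind your proof of the second assertion is incomplete.
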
  
  
\medskip  
In the case of different denominators, the Newton polygon has at most six edges.

\begin{corollary}[Generic Rational Functions  
with Different Denominators]\label{genericrational}  
Let $ D\ge d$, $E\ge e$, $F,G\ge 0$, and consider the parametrization $\displaystyle  
\rho=\Big(\frac{p}{r},\frac{q}{s}\Big)\in \K(t)^2$ where  
$$  
p(t)=\alpha_dt^d+\cdots+\alpha_Dt^D \quad, \quad  
q(t)=\beta_et^e+\cdots+\beta_Et^E \quad \in \K[t^{\pm1}]  
$$  
and  
$$  
r(t)=\gamma_0+\cdots+\gamma_Ft^F \quad, \quad  
s(t)=\delta_0+\cdots+\delta_Gt^G \quad \in \K[t]  
$$  
such that $\alpha_d,\alpha_D, \beta_e,\beta_E, \gamma_0,\gamma_F, \delta_0,\delta_G\ne 0$, then  
$$  
\deg(\rho)\newton\big(\ov{\rho(\T^1)}\big)  
= P\big((D-d,0),(0,E-e), (F-D,G-E),(d,e), (-F,0), (0,-G)\big)  
$$  
if and only if $p,q,r,s$ are pairwise coprime.  
If moreover the vectors $(D-d,0),(0,E-e), (d,e), (F,0),(0,G)$  
are not collinear, then  
$\deg(\rho)=1$ for generic $p,q,r,s$.  
\end{corollary}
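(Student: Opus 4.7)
The plan is to apply Theorem~\ref{mt} and compute the family of multiplicities $(\ord_v(\rho))_{v\in\P^1}$ directly from the factorizations of $p,q,r,s$. At $v=0$ the conditions $\gamma_0,\delta_0\ne 0$ force $\ord_0(r)=\ord_0(s)=0$, whence $\ord_0(\rho)=(d,e)$; analogously the leading coefficient hypotheses $\alpha_D,\beta_E,\gamma_F,\delta_G\ne 0$ yield $\ord_\infty(\rho)=(F-D,G-E)$, independently of any coprimality assumption. What is at stake is the shape of the multiplicities at the remaining points $v\in\T^1$.

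For the ``if'' direction, assume $p,q,r,s$ pairwise coprime. Then each $v\in\T^1$ is a zero of at most one of them, and $\ord_v(\rho)$ lies in one of the four cardinal directions: $(\ord_v(p),0)$, $(0,\ord_v(q))$, $(-\ord_v(r),0)$, or $(0,-\ord_v(s))$. The balancing identity $\sum_{w\in\P^1}\ord_w(h)=0$ applied to $h=p,q,r,s$, combined with the known values at $0$ and $\infty$, gives respective totals $(D-d,0)$, $(0,E-e)$, $(-F,0)$, $(0,-G)$ for the four cardinal bundles. Aggregating parallel vectors as in the definition of $P(\cdot)$, we recover the claimed six-vector representation and conclude by Theorem~\ref{mt}.

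For the converse I argue contrapositively. If $p$ and $r$ share a root $v_0\in\T^1$ of multiplicities $m,n>0$ (lying on neither $q$ nor $s$), then $\ord_{v_0}(\rho)=(m-n,0)$, and a short accounting shows that both the positive and negative horizontal edge lengths of $P((\ord_v(\rho))_v)$ drop by $\min(m,n)$ relative to the claimed polygon. If instead $p$ and $q$ share a root $v_0$ with multiplicities $m,n>0$, then $\ord_{v_0}(\rho)=(m,n)$ either introduces a new edge direction or, when accidentally parallel to $(d,e)$ or $(F-D,G-E)$, enlarges the corresponding diagonal edge while shortening the cardinal ones. A symmetric analysis handles the four remaining pairs among $\{p,q,r,s\}$. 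I expect the diagonal-alignment subcases to be the main technical obstacle, as one must track edge-length shifts simultaneously in several directions and rule out coincidental cancellations.

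Finally, under the non-collinearity hypothesis the polygon $P$ has positive area. For the generic degree claim, the locus in parameter space where $\rho$ fails to be birational onto its image is Zariski-closed, so it suffices to exhibit one explicit coefficient choice (satisfying both the coprimality and the leading/trailing coefficient conditions) for which $\rho$ is birational onto a curve with Newton polygon $P$; the equality $\deg(\rho)\,\newton(\ov{\rho(\T^1)})=P$ then forces $\deg(\rho)=1$ on a dense Zariski open set by semicontinuity of the degree.
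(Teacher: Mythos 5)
Your overall strategy coincides with the paper's: reduce everything to the multiplicity vectors via Theorem~\ref{mt} (the aggregation step you describe is exactly Lemma~\ref{practical}), and your treatment of the ``if'' direction and of the values $\ord_0(\rho)=(d,e)$, $\ord_\infty(\rho)=(F-D,G-E)$ is correct. The first genuine gap is in the converse, where you organize the case analysis by which pair of $\{p,q,r,s\}$ shares a root and then explicitly leave the ``diagonal-alignment subcases'' and possible ``coincidental cancellations'' unresolved. That difficulty is not incidental: with several simultaneous common factors your bookkeeping of edge-length shifts does not obviously close up. It disappears if you instead organize by edge direction, which is how the paper argues (for the equal-denominator case; it declares this one analogous): for any coefficients, the total length of the edges of $P\big((\ord_v(\rho))_v\big)$ with inner normal $(1,0)$ coming from $v\in\T^1$ equals $\sum_{v\in\T^1,\ \ord_v(g)=0}\max\{0,\ord_v(f)\}\le\sum_{v\in\T^1}\ord_v(p)=D-d$, with equality if and only if no root of $p$ in $\T^1$ is a root of $q$, $r$ or $s$. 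This is a one-sided bound, so no cancellation between different offending pairs can restore the edge; the same bound in the directions $(0,1)$ and $(-1,0)$ yields all six pairwise coprimalities.

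The second gap is in the degree claim. Reducing to the exhibition of a single birational specialization by openness of the condition $\deg(\rho)=1$ is the right move, but you never produce the witness, and that is where the content lies. The paper takes $p=t^d(t-v_1)^{D-d}$, $q=t^e(t-v_2)^{E-e}$, $r=(t-v_3)^F$, $s=(t-v_4)^G$ and applies Proposition~\ref{grado generico}, which gives birationality for generic $v_i$ once two of the multiplicity vectors are linearly independent; when the only pair of independent vectors involves $(d,e)$ (whose base point $0$ is not a free parameter) one first performs the translation $t\mapsto t+v_j$ before applying the proposition. Without this, or some equivalent explicit example, your semicontinuity argument has nothing to propagate from.
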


\medskip  
Newton polygons arising from the generic cases are very special: they have at most six edges and  a particular shape.  
It is then natural to ask which convex lattice polygons do realize  
as the  
Newton polygon of a rational plane curve.  
  
Let $Q\subset\R^2$ be an arbitrary lattice convex polygon.  
Setting $\tJ:=\Card(Q\cap\Z^2)-1$  
we identify  
the space of Laurent polynomials with support contained in $Q$ with  
$\K^{\tJ+1}$. Consider the set  
$$  
M_Q^\circ:=\big\{F\in\K[x^{\pm1},y^{\pm1}]: \ \newton(F)=Q , \ V(F) \subset \T^2 \mbox{  
is a rational curve} \big\}\subset \P^\tJ  
$$  
and let  
$M_Q  
\subset\P^\tJ$ denote its Zariski  
closure.  
The set $M_Q^\circ$ can be regarded as the space parametrizing divisors of $\T^2$ of the form $\delta[C]$  where $C$ is a rational curve, $\delta\ge 1$ and $\newton(\delta[C])=Q$.  
  
A convex lattice  
polygon of $\R^2$ is said {\it non-degenerate} if it is of dimension~$2$ or equivalently, if it has positive area.  
Recall that a variety is {\it unirational} if its function field  
admits a finite extension which is purely transcendental.  
As a further consequence of Theorem~\ref{mt} we obtain the following result.  
  
\begin{theorem}\label{M_Q}  
Let $Q\subset \R^2$ be a non-degenerate convex lattice polygon,  
then  
$M_Q$  
is a unirational  variety of dimension  
$\Card(\partial Q\cap\Z^2)-1.$  
If moreover $\Char(\K)=0$, the variety $M_Q$ is rational.  
\end{theorem}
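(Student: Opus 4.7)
The plan is to construct an explicit dominant rational map from a rational variety $U$ onto $M_Q$. This will yield unirationality and the dimension count immediately, while rationality in characteristic zero follows by descending to a natural quotient. The parameter space $U$ is built from the finest admissible edge decomposition of $Q$ provided by Theorem~\ref{mt}.

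Let $w_1,\ldots,w_r \in \Z^2$ be the primitive edge vectors of $Q$ and $\ell_1,\ldots,\ell_r$ the corresponding lattice edge lengths, so that $N := \Card(\partial Q \cap \Z^2) = \sum_{j=1}^r \ell_j$. Consider the balanced family $B = (v_1,\ldots,v_N) \subset \Z^2$ in which each $w_j$ appears exactly $\ell_j$ times (in some fixed order), so that $P(B) = Q$. Set
$$U := \bigl\{(t_1,\ldots,t_{N-1},\alpha,\beta) \in \A^{N-1} \times \T^2 : t_i \neq t_j \text{ whenever } i \neq j \bigr\},$$
a rational variety of dimension $N+1$, and define a rational map $\Phi : U \dashrightarrow M_Q$ by sending $u = (t_1,\ldots,t_{N-1},\alpha,\beta)$ to the normalized equation of the image curve of
$$\rho_u(s) = \Bigl(\alpha \prod_{i=1}^{N-1}(s-t_i)^{v_{i,1}},\ \beta \prod_{i=1}^{N-1}(s-t_i)^{v_{i,2}}\Bigr).$$
The balancing condition forces $\ord_\infty(\rho_u) = v_N$, so the full multiplicity family of $\rho_u$ is exactly $B$, and Theorem~\ref{mt} gives $\newton(\rho_u^*(\T^1)) = P(B) = Q$. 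Since the $v_i$ are primitive and the $t_i$ are in general position, for generic $u$ the map $\rho_u$ is birational onto its image, hence $\Phi(u) \in M_Q^\circ$.

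The affine group $\operatorname{Aff}(\K) = \K^\times \ltimes \K$ acts on $U$ by affine reparameterizations of $\A^1$ (which fix $\infty$), with a compensating action on $(\alpha,\beta)$, and $\Phi$ is invariant under this action since any two birational parametrizations of the same curve with one prescribed multiplicity at infinity differ by an element of $\operatorname{Aff}$. The generic fibre of $\Phi$ is therefore an $\operatorname{Aff}$-orbit, so $\dim \Phi(U) = (N+1) - 2 = N-1$. The main obstacle of the proof is to show that $\Phi(U)$ is dense in $M_Q$: any $F \in M_Q^\circ$ admits a birational parametrization whose multiplicities form a balanced family that partitions each edge vector $\ell_j w_j$ into positive integer multiples of $w_j$, and the finest such partition is $B$; coarser partitions use strictly fewer points of $\P^1$ and thus define strata of strictly smaller dimension. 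A degeneration argument, coalescing two points of $U$ that carry equal primitive multiplicities into a single point with doubled multiplicity, will place these lower-dimensional strata inside the closure of $\Phi(U)$. Putting the pieces together yields $\dim M_Q = N-1 = \Card(\partial Q \cap \Z^2) - 1$, and $M_Q$ is unirational because $U$ is.

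For rationality when $\Char(\K) = 0$, I will use that the $\operatorname{Aff}$-quotient $U / \operatorname{Aff}$ is itself rational: the $\operatorname{Aff}$-action can be used to normalize $t_1 = 0$ and $t_2 = 1$, which identifies $U/\operatorname{Aff}$ birationally with an open subvariety of $\A^{N-3} \times \T^2$. Since the generic fibre of $\Phi$ is an $\operatorname{Aff}$-orbit, $\Phi$ descends to a birational map $U/\operatorname{Aff} \dashrightarrow M_Q$, giving rationality. In positive characteristic the descent to a separable quotient is not automatic, which explains why only unirationality is asserted in general.
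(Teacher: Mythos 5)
Your overall strategy is the one the paper follows: use Theorem~\ref{mt} to describe exactly which parametrizations produce a divisor with Newton polygon $Q$, exhibit a rational parameter space for them, quotient by reparametrization, and push forward through the implicitization (resultant) map. The differences lie in the coordinates you choose on the parameter space, and one of them creates a real problem. The paper parameterizes the factors by their coefficient vectors, i.e.\ by points of $\P^{\ell_i}$ (its space $U_Q$), which has two advantages over your root coordinates $(t_1,\dots,t_{N-1})$: it is a single irreducible space that already contains the configurations with coalesced roots and with a root at infinity, so no degeneration argument is needed to see that the image is dense in $M_Q$ (your coalescence step is plausible but is only a sketch and is load-bearing both for the upper bound on $\dim M_Q$ and for irreducibility); and it quotients out the symmetric-group redundancy among roots carrying the same normal vector.

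The genuine gap is the claim that the generic fibre of $\Phi$ is a single $\operatorname{Aff}$-orbit. It is not. Fix a reduced $[C]$ in the image with birational parametrization $\rho$; generically $\rho$ has exactly $\ell_j$ support points of multiplicity $w_j$ for each $j$. A point $u\in U$ with $\Phi(u)=[C]$ amounts to a M\"obius reparametrization $\rho\circ\mu$ together with a labelling of its finite support points compatible with the fixed order of $B$. The constraint defining your $U$ is only that $\mu(\infty)$ be one of the $\ell_{j_0}$ points of multiplicity $w_{j_0}=v_N$, and for each such choice there are $(\ell_{j_0}-1)!\prod_{j\ne j_0}\ell_j!$ admissible labellings; in total the fibre is a union of $\prod_{j}\ell_j!$ distinct $\operatorname{Aff}$-orbits. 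This is harmless for the dimension count ($\dim$ of the fibre is still $2$) and for unirationality (after cutting $U$ down to a general rational subvariety of dimension $N-1$, or passing to $U/\operatorname{Aff}$, one gets a dominant generically finite map from a rational variety). But it breaks the rationality argument in characteristic $0$: the descended map $U/\operatorname{Aff}\dashrightarrow M_Q$ has generic degree $\prod_j\ell_j!$, which is $>1$ as soon as some edge has lattice length $\ge 2$, so it is not birational and rationality of $M_Q$ does not follow. To repair this you must quotient out the extra finite symmetry, e.g.\ by replacing the ordered root tuples with the polynomials they define (passing from $\A^{\ell_j}$ to $\operatorname{Sym}^{\ell_j}\A^1\cong\A^{\ell_j}$, and allowing a root at infinity, i.e.\ $\P^{\ell_j}$) and then normalizing with the full M\"obius group rather than only $\operatorname{Aff}$ --- which is precisely the paper's construction of $U_Q$ and $P_Q$, followed by the separability argument identifying $\Xi^*(\K(M_Q))$ with the whole purely transcendental field.
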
  
  
\medskip  
Under the assumption that $Q$ is non-degenerate,  
the polygon has at least three edges and so $\dim(M_Q)\ge 2$.  
In particular, $M_Q^\circ$ is non-empty. Moreover, we show in Proposition~\ref{rational map} that  
the generic member of $M_Q^\circ$ has multiplicity one or equivalently, that  
a generic parameterization corresponding to a divisor in $M_Q^\circ$ is  
birrational.  
In particular  
there always exists a rational plane curve  
$C$  
such that $Q=\newton(C).$

For a lattice segment  $S\subset \R^2$, we show in Proposition ~\ref{M_Q_segment} that $\dim(M_S)=1$ and that the multiplicity of a general member of $M_S^\circ$ equals $\ell(S)$.  
In particular, there  exists a rational plane curve  
$C$  
such that $S=\newton(C)$ if and only if $S$ does not contain any lattice point except its endpoints.  
  
\medskip  
We also characterize the polygons which can be realized as the Newton polygon  of a curve  
pa\-ra\-me\-trized with polynomials or with Laurent polynomials.  
  
\begin{theorem}\label{polly}  
Let $Q\subset\R^2$ be a non-degenerate lattice convex polygon, then  
$Q=\newton(\ov{\rho(\T^1)})$ for at least one $\rho\in \K[t]^2$  
({\it resp.} $\rho\in \K[t^{\pm1}]^2$) if and only if all but one ({\it resp.}  
one or two) of its inner normal directions lie in~$(\R_{\ge0})^2$.  
\end{theorem}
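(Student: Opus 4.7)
The plan is to reduce both implications to elementary combinatorial statements about the multiplicities $\ord_v(\rho)$, via Theorem~\ref{mt}.

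For the \emph{necessity}, suppose $\rho = (f, g) \in \K[t]^2$ (resp.\ $\K[t^{\pm 1}]^2$) satisfies $\newton(\ov{\rho(\T^1)}) = Q$. Since $f, g$ are regular on $\A^1 = \K$ (resp.\ on $\T^1 = \K^\times$), we have $\ord_v(\rho) \in (\Z_{\geq 0})^2$ for every $v \in \K$ (resp.\ $v \in \K^\times$), so by Theorem~\ref{mt} the inner normal directions of $Q$ outside $(\R_{\geq 0})^2$ can only come from $v = \infty$ (resp.\ from $v \in \{0, \infty\}$) --- at most one (resp.\ at most two). Non-degeneracy of $Q$ together with the balancing $\sum_v \ord_v(\rho) = (0, 0)$ forces at least one such bad direction to exist, yielding the stated count.

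For the \emph{sufficiency} in the polynomial case, write the primitive inner normals of $Q$ as $e_0, e_1, \ldots, e_s$ with lattice edge lengths $\ell_i$, so that $e_1, \ldots, e_s \in (\R_{\geq 0})^2$ and $e_0 \notin (\R_{\geq 0})^2$; the balancing $\sum_i \ell_i e_i = 0$ then forces $\ell_0 e_0 \in (\R_{\leq 0})^2$. For each $i \geq 1$ I would choose $\ell_i$ pairwise distinct points $v_{i, 1}, \ldots, v_{i, \ell_i} \in \K$, all distinct across directions, and set
\[
f(t) := \prod_{i, j} (t - v_{i, j})^{(e_i)_1}, \qquad g(t) := \prod_{i, j} (t - v_{i, j})^{(e_i)_2} \ \in \K[t].
\]
Then $\ord_{v_{i, j}}(\rho) = e_i$ and, by the balancing, $\ord_\infty(\rho) = \ell_0 e_0$, so Theorem~\ref{mt} yields $\newton(\rho^*(\T^1)) = Q$ and hence $\newton(\ov{\rho(\T^1)}) = Q/\deg(\rho)$. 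The Laurent polynomial case is analogous: place $\ell_0 e_0$ at $v = \infty$ and (when present) $\ell_0' e_0'$ at $v = 0$, and choose good points in $\K^\times$.

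It remains to arrange $\deg(\rho) = 1$ for some choice of parameters. The locus in the parameter space of $(v_{i, j})$'s where $\deg(\rho) \geq 2$ is Zariski closed (cut out by the vanishing of a resultant in $t, t'$ of $(f(t) - f(t'))/(t - t')$ and $(g(t) - g(t'))/(t - t')$), and I expect it to be a proper subvariety: a non-trivial factorization $\rho = \rho' \circ \pi$ through a degree-$d$ cover $\pi \colon \A^1 \to \A^1$ would force the multiset $(v_{i, j})$ to be a disjoint union of fibers of $\pi$ with prescribed multiplicities, a condition of positive codimension. This is the hard part: verifying properness becomes delicate when $Q = k Q'$ is a non-trivial integer multiple of a smaller lattice polygon, and may alternatively be handled by invoking Proposition~\ref{rational map} (the generic member of $M_Q^\circ$ has multiplicity one) together with a dimension count showing that the explicit family constructed above meets the multiplicity-one stratum of $M_Q^\circ$.
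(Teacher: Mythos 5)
Your necessity argument is correct and is, in substance, the paper's: for $\rho\in\K[t]^2$ all finite multiplicities lie in $\N^2$, so by Theorem~\ref{mt} the only possible inner normal direction of $Q$ outside $(\R_{\ge0})^2$ is the one spanned by $\ord_\infty(\rho)$ (resp.\ by $\ord_0(\rho)$ and $\ord_\infty(\rho)$ in the Laurent case), and the balancing condition together with non-degeneracy forces at least one such direction. Your construction for sufficiency is also the paper's (phrased there via the space $U_Q$, fixing $p_{i_0}=1$, resp.\ $p_{i_0}=t^{a(i_0)}$, for the bad indices and choosing the remaining roots freely).

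The genuine gap is the final step, $\deg(\rho)=1$ for some choice of the free roots, which you explicitly leave open. Neither of your two suggestions closes it. The resultant argument only gives Zariski-closedness of the locus $\deg(\rho)\ge 2$; properness is the entire content, and your heuristic about fibers of covers is not a proof. The fallback via Proposition~\ref{rational map} fails for a structural reason: the divisors parametrizable by polynomials form, in general, a \emph{proper} closed subvariety of $M_Q$ (the constrained family has dimension $2+\sum_{i\ne i_0}\ell_i$ and its fibers under $\Xi$ still carry the $2$-dimensional affine reparametrization group, so its image has dimension $\le \Card(\partial Q\cap\Z^2)-\ell_{i_0}<\dim M_Q$ as soon as $\ell_{i_0}\ge 2$); hence knowing that the generic member of $M_Q^\circ$ is reduced says nothing about the generic member of your subfamily. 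The tool that actually closes the gap is Proposition~\ref{grado generico}: birationality holds for a generic choice of just \emph{two} roots carrying linearly independent multiplicity vectors, all other data being fixed. In the polynomial case this applies directly to your family, because $Q$ non-degenerate has $r\ge3$ edges, so at least two directions $e_i,e_j$ ($i,j\ne i_0$) remain freely placed, and any two distinct primitive vectors of $(\R_{\ge0})^2$ are automatically linearly independent --- in particular your worry about $Q=kQ'$ is immaterial, since Proposition~\ref{grado generico} is insensitive to the contractibility of $Q$ (this is exactly how the paper deduces Corollary~\ref{degreeQ}). In the Laurent case with two pinned directions one of the two independent roots may be forced to sit at $0$; this is handled, as in the paper's proof of Corollary~\ref{genericrationalequaldenom}, by noting that $\deg(\rho)$ is invariant under the translation $t\mapsto t+w$, so genericity in the unconstrained pair descends to the slice where that root equals $0$.
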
  
  
\medskip  
This result gives some idea of the discrepancy between general rational  
curves and curves parameterized by polynomials or Laurent polynomials.  
For instance, for the polygon in Figure~\ref{fig:1} above,  the variety $M_Q$ is a rational  
hypersurface of $\P^5$ but none of its members is a curve parameterized with  
 polynomials or with Laurent polynomials.

\bigskip  
The text is organized as follows: in Section \ref{proof} we give a  
detailed proof of Theorem~\ref{mt}. In Section \ref{degree} we  
prove that if $Q$ is a non-degenerate polygon, then the generic  
parametrization having $Q=\newton(\rho^*(\T^1))$ is birational.  
We also give an algorithmic criteria in order to compute $\newton(C)$  
and finally prove Corollaries~\ref{genericLaurent},  
\ref{genericrationalequaldenom} and~\ref{genericrational}. In Section \ref{variedad}  
we study geometric properties of $M_Q$, and give  
proofs of Theorems~\ref{M_Q} and \ref{polly}. We conclude by considering the case when $Q$ is a lattice  
segment.

\par\bigskip{\bf Acknowledgments.}  
We thank Jos\'e Ignacio~Burgos, Laura Costa, Luis Felipe Tabera and Gerald Welters for helpful comments and conversations.  
We also thank Alicia Dickenstein and Bernd Sturmfels for pointing out  
and explaining us their results on tropical elimination theory.  
Experiments where carried out with the aid of  {\tt Maple} and  
M.~Franz's package {\tt Convex}~\cite{Fra06}.  
  

\bigskip  
\section{Proof of Theorem \ref{mt}}\label{proof}  
  
All considered varieties are defined over $\K$, reduced and  
irreducible. For a family of regular functions $f_1,\dots, f_s$ on  
an algebraic space, we denote by $V(f_1,\dots, f_s)$ the algebraic set  
they define in this space.  
A property depending on parameters  
is said {\it generic} if it holds for all points in a dense  
open subset of the  parameter space.

For a rational function $f\in \K(t)^\times$ we set $f_\num, f_\den\in \K[t]$ for its {\it numerator} and {\it denominator},  
which are coprime polynomials such that $f={f_\num}/{f_\den}$; these  
polynomials are well-defined up to a scalar factor.  
We denote by  
$$  
\deg(f):=\deg(f_\num)-\deg(f_\den)  
\quad ,\quad  
 \eta(f):=\max\{\deg(f_\num),\deg(f_\den)\}  
$$  
the {\it degree} and the {\it height} of $f$, respectively.  
  
A convex lattice  
polygon of $\R^2$ is {\it non-degenerate} if it is of dimension~$2$ or equivalently, if it has positive area.  
The {\it lattice length} $\ell(S)$  
of a lattice segment $S\subset \R^2$  is  
the number of  points of $\Z^2$ on it (including its endpoints) minus~$1$.  
We denote by $\N$ and $\N^\times$  
the set of  non-negative and positive integers respectively.

\bigskip  
With notation as in the introduction, we fix a reduced equation  
$E_C\in \K[x^{\pm1},y^{\pm1}]$ for  
the plane curve  $C=\ov{\rho(\T^1)} $ and we set $\newton(C):=\newton(E_C)\subset \R^2$ for its Newton polygon.  
A possible way of fixing $E_C$ (up to a non-zero scalar)  is to  
suppose that it lies in $\K[x,y]$ and that  
neither $x$ nor $y$ divide it or equivalently, that  
$\newton(C)$ is contained in  
the first quadrant and touches both the horizontal and vertical axes.  
Nevertheless,  
any other choice will be equally good for what follows.

\smallskip  
For $\sigma=(\sigma_1,\sigma_2)\in (\Z\setminus\{0\})^2$ we set  
$G(\sigma)$ for the Zariski closure of the set  
$$  
\{(t,x,y): x^{\sigma_1}=f(t), y^{\sigma_2}=g(t)\}\subset \T^1\times\T^2,  
$$  
which is a pure $1$-dimensional algebraic set.  
Let $\pi:\T^1\times\T^2\to \T^2$ denote the natural projection onto the second factor and  set  
$$  
C(\sigma):=\ov{\pi(G(\sigma))} \subset \T^2.  
$$  
Note that $G:=G(1,1)$ is the graph of $\rho$, while  
$C(1,1)=C.$  
  
\smallskip  
We define the {\em degree} $\deg(Z)$ of a pure $1$-dimensional algebraic set in the torus $\T^2$  
as its degree  
with respect to the standard  
inclusion $\iota:\T^2\hookrightarrow \P^2$;  in other words  
$$  
\deg(Z)=\Card\big( \iota(Z) \cap L\big)  
$$  
for a generic line $L\subset \P^2$.

Given a Laurent polynomial $F\in \K[x^{\pm1},y^{\pm1}]$  
we denote by $\deg_x(F)$ ({\it resp.} $\deg_y(F)$) its degree  
in the variable $x$ ({\it resp.} $y$), and $\deg(F)$ its total degree in $x$ and $y.$  
Its  
homogenization with respect to $\iota$ can then be written as $F^\h(w,x,y)=w^\delta F(x/w,y/w)$ with  
$$\delta = \deg(F(x,y))+ \deg_x(F(x^{-1},y))+ \deg_y(F(x,y^{-1})).$$  
Hence in case  $F$  is reduced, we have  
\begin{equation}\label{grado Laurent}  
\deg(V(F))= \deg(F(x,y))+ \deg_x(F(x^{-1},y))+ \deg_y(F(x,y^{-1})).  
\end{equation}  
  
\medskip  
The support function of a convex set $Q\subset \R^2$ is defined as  
$$  
h(Q;\cdot): \R^2\to \R \quad ,\quad  
w\mapsto \max\{\langle w,u\rangle,\, u\in Q\}.  
$$  
This is a piecewise affine convex function, which completely characterizes $Q$ as the set of points  
$u\in\R^2$ such that $\langle w,u\rangle \le h(Q;w)$ for all $ w\in \R^2$.  
Note that for $F\in\K[x^{\pm1},y^{\pm1}]$ such that $\newton(F)=Q$ and  
$\sigma=(\sigma_1,\sigma_2)\in\Z$ we have  
$\deg\left(F(x^{\sigma_1},y^{\sigma_2})\right)=h(Q;\sigma)$. This fact will be used in the sequel.  
  
\medskip  
The following result expresses  
a linear combination of support functions of $\newton(C)$ as an intersection number.  
It can be regarded as some kind of extension of the ``Perron's  
theorem'' in~\cite[Thm.~3.3]{Jel05}.

\begin{proposition}\label{contador}  
With notation as above, set $p:=\Char(\K)$ and let $\sigma_1,\sigma_2  
\in \Z\setminus p \Z$, then  
$\deg(\pi|_{G(\sigma)})=\deg(\rho)$ and  
\begin{equation}\label{formula}  
h(\newton(C); (\sigma_1,\sigma_2))  
+h(\newton(C); (-\sigma_1,0))  
+h(\newton(C); (0,-\sigma_2))  
=\deg(C(\sigma_1,\sigma_2)).  
\end{equation}  
\end{proposition}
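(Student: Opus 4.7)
The plan is to exploit the finite endomorphism
$$
\phi\colon \T^2\to\T^2, \quad (x,y)\mapsto (x^{\sigma_1},y^{\sigma_2}),
$$
which under the hypothesis $\sigma_1,\sigma_2\in \Z\setminus p\Z$ is étale of degree $|\sigma_1\sigma_2|$. Unwinding the definitions, a point $(x,y)\in \T^2$ lies in $\pi(G(\sigma))$ exactly when $(x^{\sigma_1},y^{\sigma_2})\in \rho(\T^1)$; taking Zariski closures and using that $\phi$ is a finite surjection, this yields the set-theoretic identity $C(\sigma)=\phi^{-1}(C)$, which is the geometric backbone of both claims.

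For the equality $\deg(\pi|_{G(\sigma)})=\deg(\rho)$ I would compute a generic fibre directly. A generic point $(x,y)\in C(\sigma)$ maps under $\phi$ to a generic point of $C$, so
$$
\pi^{-1}(x,y)\cap G(\sigma)=\big\{t\in \T^1:f(t)=x^{\sigma_1},\ g(t)=y^{\sigma_2}\big\}=\rho^{-1}(x^{\sigma_1},y^{\sigma_2})
$$
has exactly $\deg(\rho)$ elements by the very definition of the degree of a rational map.

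For the main identity~(\ref{formula}) the key technical input is that étale pullback preserves radical ideals; thus $F(x,y):=E_C(x^{\sigma_1},y^{\sigma_2})$ is a square-free Laurent polynomial whose zero set in $\T^2$ equals $\phi^{-1}(C)=C(\sigma)$, so $F$ is a reduced defining equation for $C(\sigma)$. Applying~(\ref{grado Laurent}) to $F$ yields
$$
\deg(C(\sigma))=\deg(F)+\deg_x(F(x^{-1},y))+\deg_y(F(x,y^{-1})).
$$
Since $\newton(F)$ is the image of $\newton(C)$ under $(a,b)\mapsto(\sigma_1 a,\sigma_2 b)$, a short direct computation identifies the three summands on the right-hand side with $h(\newton(C);(\sigma_1,\sigma_2))$, $h(\newton(C);(-\sigma_1,0))$ and $h(\newton(C);(0,-\sigma_2))$ respectively, which is exactly~(\ref{formula}).

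The main obstacle is the reducedness of $F$: this is precisely where the hypothesis $\sigma_1,\sigma_2\notin p\Z$ is used, since when $p\mid\sigma_i$ the morphism $\phi$ picks up an inseparable factor (for instance, if $\sigma_1=\sigma_2=p$ one has $E_C(x^p,y^p)=E_C(x,y)^p$), and then~(\ref{grado Laurent}) cannot be applied to $F$. A secondary subtlety is that $G(\sigma)$ need not be irreducible, so $\deg(\pi|_{G(\sigma)})$ must be interpreted as a generic-fibre cardinality, but the fibre computation above is insensitive to this.
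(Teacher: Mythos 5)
Your proof is correct and, for the main identity~(\ref{formula}), follows essentially the same route as the paper: exhibit $E_C(x^{\sigma_1},y^{\sigma_2})$ as a \emph{reduced} equation for $C(\sigma)$ and then apply~(\ref{grado Laurent}), translating the three degrees into values of the support function. You differ only in two local justifications: for reducedness you invoke that pullback along the \'etale map $\phi$ preserves reducedness (with $\sigma_1,\sigma_2\notin p\Z$ guaranteeing \'etaleness), whereas the paper uses the criterion that a Laurent polynomial is reduced iff its gradient does not vanish identically on any component, combined with the chain rule; and for $\deg(\pi|_{G(\sigma)})=\deg(\rho)$ you count a generic fibre directly via $\pi^{-1}(x,y)\cap G(\sigma)=\rho^{-1}(\phi(x,y))$, whereas the paper deduces it from multiplicativity of degrees in the commutative square linking $G(\sigma)$, $G$, $C(\sigma)$ and $C$. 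Both variants are sound, and your observation that reducedness is exactly where the hypothesis on $p$ enters coincides with the paper's use of it.
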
  
  
\begin{proof}  
Let $\pi:\T^1\times \T^2\to \T^2$ denote the natural projection onto the second  
factor and $\chi:\T^2\to \T^2$ the monomial map $(x,y)\mapsto(x^{\sigma_1},y^{\sigma_2})$.  
Set $\sigma:=(\sigma_1,\sigma_2)$ and consider the commutative diagram  
\begin{equation}\label{diagrammereduc}  
\xymatrix{  
G(\sigma) \ar[r]^{\Id_{\T^1}\times \chi }  
\ar[d]^{\pi} & G\ar[d]^{\pi}\\  
C(\sigma) \ar[r]^{\chi} & C}  
\end{equation}  
The horizontal arrows are finite coverings of the same  
degree $|\sigma_1\sigma_2|$. On the other hand  
$$  
\deg(\chi) \deg(\pi|_{G(\sigma)}) =  
\deg(\chi \circ \pi|_{G(\sigma)}) =  
\deg\big(\pi|_{G} \circ (\Id_{\T^1}\times \chi)\big)  
= \deg(\pi|_{G}) \deg(\Id_{\T^1}\times \chi) ,  
$$  
which implies the first part of the proposition: $\deg(\pi|_{G(\sigma)})=  
\deg(\pi|_{G}) =\deg(\rho)$.  
  
\medskip  
For the second part, we claim that  
$\chi^*(E_C)=E_C(x^{\sigma_1},y^{\sigma_2})$  
is a reduced equation for~$C(\sigma)$.  
We recall that a Laurent polynomial $F\in \K[x^{\pm1},y^{\pm1}]$ is reduced if and only if  
its gradient $\nabla(F) =(\partial F /\partial x, \partial F/\partial y)$ does not vanish on any of the components of $V(F)$, see for instance ~\cite[Lem.~3.1]{Jel05}.  
This condition automatically holds for $\chi^*(E_C)$, because by construction  
it holds for  $E_C$, and  
$\sigma_1,\sigma_2 \notin p\Z$. Hence $\chi^*(E_C)$ is reduced.  
On the other hand, it is clear that $C(\sigma)$ coincides with the zero set of  
$\chi^*(E_C)$, which proves the claim.  
  
\smallskip  
Identity~(\ref{grado Laurent}) implies then  
\begin{align*}  
\deg(C(\sigma))& = \deg(\chi^*(E_C)(x,y))  
+ \deg_x(\chi^*(E_C)(x^{-1},y))+ \deg_y(\chi^*(E_C)(x,y^{-1})) \\[2mm]  
& = \deg(E(x^{\sigma_1},y^{\sigma_2}))  
+ \deg_x(E(x^{-\sigma_1},y^{\sigma_2}))+ \deg_y(E(x^{\sigma_1},y^{-\sigma_2})) \\[2mm]  
& = h(\newton(C); (\sigma_1,\sigma_2))  
+h(\newton(C); (-\sigma_1,0))  
+h(\newton(C); (0,-\sigma_2)).  
\end{align*}  
\end{proof}

\smallskip  
  
Note that the algebraic set $G(\sigma)$ can be written as  
$$  
G(\sigma)=V\big(f_\den(t)x^{\sigma_1}-f_\num(t),g_\den(t)y^{\sigma_2}-g_\num(t)\big).  
$$  
For a generic line $L\subset \P^2$, we have that  
$G(\sigma)\cap (\T^1\times\iota^{-1}(L)) = \pi^{-1}(C(\sigma)\cap L)$  
and so  
$$  
\Card\big(G(\sigma)\cap (\T^1\times\iota^{-1}(L))\big) = \deg(\pi|_{G(\sigma)}) \deg(C(\sigma)).  
$$  
Hence Proposition~\ref{contador}  implies that for  
$\sigma_1,\sigma_2\in \Z\setminus p \Z$,  
the quantity  
$$  
\deg(\rho)\big(h(\newton(C); (\sigma_1,\sigma_2))  
+h(\newton(C); (-\sigma_1,0))  
+h(\newton(C); (0,-\sigma_2)) \big)  
$$  
equals the number of solutions in $\T^1\times\T^2$ of the system of equations  
\begin{align} \label{system}  
\nonumber \ell_0+\ell_1\,x+\ell_2\,y&=0\\[1mm]  
f_\num(t)-f_\den(t)x^{\sigma_1}  
&=0\\[1mm]  
\nonumber  
g_\num(t)-g_\den(t)y^{\sigma_2}  
&=0  
\end{align}  
for generic $\ell_0,\ell_1,\ell_2\in\K$.  
  
\medskip  
The number of solutions of this system  
can be expressed in combinatorial terms thanks to a result of Philippon and the second author~\cite{PS07a,PS07}.  
We introduce some combinatorial  
invariants in order to explain it better.  
Restricting to our setting, let  
$$  
H(t,x,y)=\sum_{j=0}^M\alpha_j(t) x^{a_j}y^{b_j}\in\K(t)[x^{\pm 1},y^{\pm 1}]  
$$  
be a non-zero Laurent polynomial in the variables $x,y$ with coefficients rational functions in the variable $t$.  
For each $v\in \P^1$ consider the {\it $v$-adic Newton polytope of~$H$}  
$$  
\newton_v(H):=\Conv\big( (a_0,b_0,-\ord_v(\alpha_0)), \dots,(a_M,b_M,-\ord_v(\alpha_M))\big) \subset \R^{3}.  
$$  
This polytope sits above the usual Newton polygon  
$$\newton(H):=\Conv((a_0,b_0),\dots, (a_M,b_M))\subset \R^2$$  
{\it via} the natural projection $\R^{3}\to \R^2$ which forgets the last coordinate.  
Consider the {\em roof function of $H$ at $v$} defined as  
$$  
\vartheta_v(H):\newton(H)\rightarrow\R\quad,\quad {(x,y)}\mapsto  
\max\{z\in\R:({x,y,}z)\in\newton_v(H)\}$$  
that is,  the concave and piecewise affine function parameterizing the upper envelope of $\newton_v(H)$ above $\newton(H)$.  
  
\smallskip  
For the polynomials in the system~(\ref{system}), the respective Newton polygons are  
\begin{align*}  
P_0&:=\newton(\ell_0+\ell_1x+\ell_2)= \Conv\big((0,0),(1,0),(0,1)\big), \\[1mm]  
P_1&:=\newton(f_\num(t)-f_\den(t)x^{\sigma_1}  
)= \Conv\big((0,0), (\sigma_1,0)\big) , \\[1mm]  
P_2&:=\newton(g_\num(t)-g_\den(t)y^{\sigma_2})= \Conv\big((0,0), (0,\sigma_2)\big),  
\end{align*}  
so that $P_0$ is the standard triangle in $\R^2$ while $P_1$ and $P_2$ are segments.  
For $v\in \P^1$ and $i=0,1,2$ we denote $P_{i,v} \subset \R^3$ and  
$\vartheta_{i,v}:P_i\to \R$  the $v$-adic Newton polytope and  
corresponding roof function for the polynomials  
$\ell_0+\ell_1\,x+\ell_2\,y$,  
$f_\den(t)x^{\sigma_1}-f_\num(t)$ and $g_\den(t)y^{\sigma_2}-g_\num(t)$,  
respectively.  
Computing them explicitly, we get $\vartheta_{0,v}\equiv 0$, while for $i=1,2$ we have that  
$\vartheta_{i,v}$ is the affine function on $P_i$ such that  
$$  
\vartheta_{1,v}(0,0) = -\ord_v(f_\num) \quad ,\quad  
\vartheta_{1,v}(\sigma_1,0)=-\ord_v(f_\den)  
$$  
and  
$$  
\vartheta_{2,v}(0,0) = -\ord_v(g_\num) \quad ,\quad  
\vartheta_{2,v}(0,\sigma_2)=-\ord_v(g_\den).  
$$  
Note that $\vartheta_{i,v}\le 0$ for $v\ne \infty$ while $\vartheta_{i,v}\ge0$ for $v=\infty$.  
The figure below shows the graph of these roof functions.  
  
\begin{figure}[htbp]
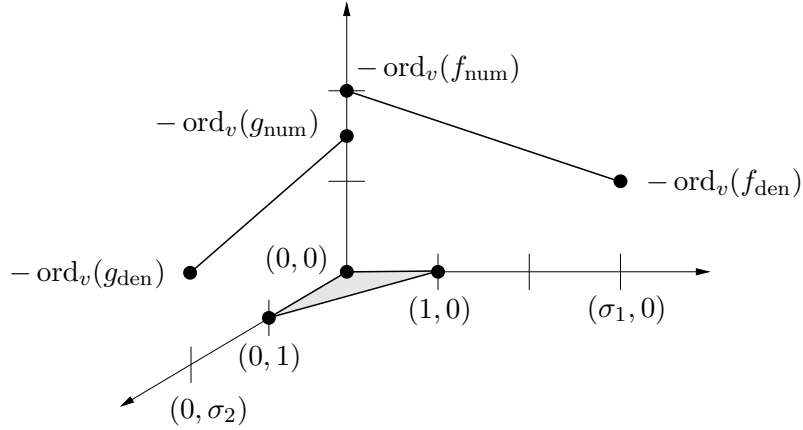
  
\input techos.pstex_t  
\vspace{-3mm}\caption{The $v$-adic Newton polytopes for the system~(\ref{system})}\label{fig:2}  
\end{figure}  
  
As the polynomials in~(\ref{system}) are primitive we can  
apply~\cite[Thm.~1.2]{PS07}, which shows that the number of solutions is bounded above by the quantity  
\begin{equation}\label{KB}  
\sum_{v\in \P^1}\MI_2(\vartheta_{0,v},\vartheta_{1,v},\vartheta_{2,v}).  
\end{equation}  
The {mixed integral} $\MI$ in this expression  
is the natural extension to concave functions of the  
mixed volume of convex bodies. As such, it satisfies  
analogous properties.  
We refer to~\cite[\S~IV]{PS04} and~\cite[\S~8]{PS07} for its definition and basic properties.  
  
\smallskip  
The estimate (\ref{KB}) is exact in this case because none  
of the relevant \textit{initial systems} has a root.  
Initial systems  in this context can be interpreted as the restriction of the input system  
to the face of the upper envelope of the $v$-adic Newton polytopes  
$P_{i,v}$'s corresponding to $\tau$. We will explain them briefly here, but refer the reader interested in more details to ~\cite[\S~6]{PS07}.  
\par  
Let $H\in \K(t)[x^{\pm1}, y^{\pm1}]$ and  $\tau\in \R^2$, then  
for $v\in \P^1\setminus \{\infty\}$ the {\it $\tau$-initial part of $H$ at $v$}  
is defined as the Laurent polynomial  
$\init_{v,\tau}(H)\in\K[x^{\pm1},y^{\pm1}]$ such that  
$$  
H(t, t^{\tau_1}x,t^{\tau_2}y) = (t-v)^c (\init_{v,\tau}(H)(x,y) + o(1))  
$$  
for a $c\in\Z$ and $o(1)$ going to $0$ as $t\to v$, while the {\em $\tau$-initial part of $H$ at $\infty$} is just defined as the $\tau$-initial part of $H(t^{-1},x,y)$ at $0$.  
  
\par\smallskip  
By~\cite[Prop.~1.4]{PS07}, if  
for all $v\in \P^1\setminus \{\infty\}$ and $\tau \ne (0,0)$, and for  
$v=\infty$ and all $\tau\in \R^2$,  
the system of equations  
\begin{equation*}  
\init_{v,\tau}(\ell_0+\ell_1x+\ell_2)=  
\init_{v,\tau}(f_\num(t)-f_\den(t)x^{\sigma_1})=  
\init_{v,\tau}(g_\num(t)-g_\den(t)y^{\sigma_2})=0  
\end{equation*}  
has no solution  
in $\T^2$, then the estimate~(\ref{KB}) counts exactly  
 the number of roots of the system in $\A^1\times \T^2$.  
  
  
In our setting, at least one of the initial polynomials  
\begin{equation}\label{initial}  
\init_{v,\tau}(f_\num(t)-f_\den(t)x^{\sigma_1}) \quad, \quad  
\init_{v,\tau}(g_\num(t)-g_\den(t)y^{\sigma_2})  
\end{equation}  
reduces to  a monomial (and hence the initial system has no solution in $\T^2$) unless  
$$  
\tau=\tau(v):= \Big(-\frac{\ord_v(f)}{\sigma_1}, -\frac{\ord_v(g)}{\sigma_2}\Big).  
$$  
We have that $\tau(v)=(0,0)$ for all but a finite number of $v$'s  
and by the stated equality criterion these $v$'s need not be considered.  
On the other hand,  
the finite number of $v$'s  such that $\tau(v)\ne (0,0)$  
produces a finite number of solutions for the initial system~(\ref{initial}),  
all of which are avoided by the linear form $\ell_0+\ell_1x+\ell_2y$, as this last one is supposed generic.

Hence none of the relevant initial systems have solutions and so~(\ref{KB}) is an equality, as announced.  
We conclude that for $\sigma_1,\sigma_2\in \Z\setminus p\Z$ the degree of the curve $C(\sigma)$ can be expressed as  
\begin{equation}\label{KB exacto}  
\deg(C(\sigma))= \frac{1}{\deg(\rho)}  
\sum_{v\in \P^1}\MI_2(\vartheta_{0,v},\vartheta_{1,v},\vartheta_{2,v}).  
\end{equation}  
  
We extend this formula to $\sigma\in (\R^\times)^2$ and make it  
explicit by computing the relevant mixed integrals.  
  
\begin{proposition}\label{MI}  
With notation as before, let $\sigma\in (\R^\times)^2$, then  
\begin{align}\label{formulaMI}  
h(\newton&(C); (\sigma_1,\sigma_2))  
+h(\newton(C); (-\sigma_1,0))  
+h(\newton(C); (0,-\sigma_2)) \\[2mm]  
&= \frac{1}{\deg(\rho)}\sum_{v\in \P^1} \max\big\{0,  
-\ord_v(g)\, \sign(\sigma_1\sigma_2) \sigma_1,  
-\ord_v(f)\, \sign(\sigma_1\sigma_2) \sigma_2 \big\}. \nonumber  
\end{align}  
\end{proposition}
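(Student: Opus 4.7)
Combining \eqref{KB exacto} with Proposition~\ref{contador} immediately gives, for every $\sigma=(\sigma_1,\sigma_2)\in(\Z\setminus p\Z)^2$,
\[
h(\newton(C);(\sigma_1,\sigma_2))+h(\newton(C);(-\sigma_1,0))+h(\newton(C);(0,-\sigma_2))
= \frac{1}{\deg(\rho)}\sum_{v\in\P^1}\MI_2(\vartheta_{0,v},\vartheta_{1,v},\vartheta_{2,v}).
\]
So \eqref{formulaMI} will follow from the local identity
\[
\MI_2(\vartheta_{0,v},\vartheta_{1,v},\vartheta_{2,v}) = \max\bigl\{0,\,-\ord_v(g)\,\sign(\sigma_1\sigma_2)\,\sigma_1,\,-\ord_v(f)\,\sign(\sigma_1\sigma_2)\,\sigma_2\bigr\}
\]
at each $v\in\P^1$ (for integer $\sigma_1,\sigma_2\notin p\Z$), together with an extension argument from integer to real $\sigma$.

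The bulk of the work lies in computing these mixed integrals. The structure of the data is very favorable: $\vartheta_{0,v}\equiv 0$ on the standard triangle $P_0$, while $\vartheta_{1,v},\vartheta_{2,v}$ are affine functions on two perpendicular segments along the coordinate axes. Using multilinearity and symmetry of the mixed integral (see \cite[\S 8]{PS07}) together with the fact that one of the arguments is identically zero, one can reduce the computation to a handful of elementary pieces. Coprimality of $f_\num$ and $f_\den$ forces, for every finite $v$, exactly one of the endpoint values $\vartheta_{1,v}(0,0)=-\ord_v(f_\num)$ and $\vartheta_{1,v}(\sigma_1,0)=-\ord_v(f_\den)$ to vanish, so $\vartheta_{1,v}$ is completely determined by the single integer $\ord_v(f)$; analogously for $\vartheta_{2,v}$ and $\ord_v(g)$. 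Running through the cases according to the signs of $\ord_v(f)$, $\ord_v(g)$, $\sigma_1$ and $\sigma_2$, and separating $v=\infty$ (where the relevant faces of the $\newton_v$-polytopes are the upper rather than the lower ones), one arrives at the displayed $\max$ formula.

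Once the integer case of \eqref{formulaMI} is established, both sides are seen to be continuous, piecewise linear, and positively homogeneous of degree one as functions of $\sigma\in(\R^\times)^2$: the left-hand side by the elementary properties of the support function, the right-hand side by direct inspection (note that $\sign(\sigma_1\sigma_2)$ is invariant under scaling by $\lambda>0$). Since the positive-real rays through points of $(\Z\setminus p\Z)^2$ form a dense subset of $(\R^\times)^2$, the equality extends from integer to arbitrary real $\sigma$.

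The main obstacle is the explicit mixed-integral calculation: the sign bookkeeping needed to recover the factor $\sign(\sigma_1\sigma_2)$ — which ultimately reflects which face of each $v$-adic Newton polytope carries the contribution — is the only genuinely delicate point of the argument; everything else (the extension step, the multilinearity manipulations, and the reduction via coprimality) is essentially formal.
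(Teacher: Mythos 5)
Your overall strategy coincides with the paper's: combine (\ref{KB exacto}) with Proposition~\ref{contador}, compute the mixed integrals explicitly, and pass from integer to real $\sigma$ by continuity and positive homogeneity (that last step is correct and is exactly what the paper does). However, the pivot of your plan --- the claimed \emph{local} identity
$\MI_2(\vartheta_{0,v},\vartheta_{1,v},\vartheta_{2,v})=\max\{0,\,-\ord_v(g)\sign(\sigma_1\sigma_2)\sigma_1,\,-\ord_v(f)\sign(\sigma_1\sigma_2)\sigma_2\}$ for each individual $v\in\P^1$ --- is false. Applying the decomposition formula \cite[Formula~(8.6)]{PS07}, the mixed integral at $v$ splits as $I_v+J_v$, where $J_v$ is a mixed-volume term equal to the displayed maximum, but where
$I_v=|\sigma_1|\,\vartheta_{1,v}\big(P_1^{(1,0)}\big)+|\sigma_2|\,\vartheta_{2,v}\big(P_2^{(0,1)}\big)$
is in general nonzero. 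Concretely, if $v$ is a finite pole of $f$ that is neither a zero nor a pole of $g$, and $\sigma_1,\sigma_2>0$, then $I_v=\sigma_1\ord_v(f)<0$, so $\MI_2(\vartheta_{0,v},\vartheta_{1,v},\vartheta_{2,v})=(\sigma_1-\sigma_2)\,\ord_v(f)$, which differs from the maximum $-\sigma_2\ord_v(f)$ by the nonzero amount $\sigma_1\ord_v(f)$.

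What is true --- and what the paper proves --- is only the \emph{global} statement: the extra face contributions cancel upon summation, $\sum_{v\in\P^1}I_v=0$, because each value $\vartheta_{i,v}(P_i^{e_i})$ is (up to sign) the order at $v$ of a fixed rational function and $\sum_{v\in\P^1}\ord_v(q)=0$ for any $q\in\K(t)^\times$. Your reduction to a per-place identity, followed by an unspecified case analysis, gives no indication that these residual terms exist, and as written it would fail on the first nontrivial example. To repair the argument you must keep the $I_v$ terms explicitly and invoke the product formula over all of $\P^1$ before identifying the sum with the right-hand side of (\ref{formulaMI}).
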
  
  
\begin{proof}  
Suppose first $\sigma_1,\sigma_2\in \Z\setminus p\Z$.  
We compute the mixed integrals in Formula~(\ref{KB})  
by applying the decomposition formula~\cite[Formula~(8.6)]{PS07}: with notation as before, for each  $v \in \P^1$ we have  
\begin{align*}  
\MI_2(\vartheta_{0,v},\vartheta_{1,v},\vartheta_{2,v}) =&  
\sum_{u\in S^1} h({P_0};u) \MI_1( \vartheta_{1,v}|_{P_1^u}, \vartheta_{2,v}|_{P_2^u}) \\[0mm]  
& +  
\sum_{y\in S^2_+} h(P_{0,v};y) \MV_2(P_{1,v}^y,P_{2,v}^y).  
\end{align*}  
Here $S^1 \subset \R^2$  denotes the unit circle and $S^2_+\subset \R^2$ denotes the set of points in the sphere $S^2$ whose last coordinate is positive, while  
 $P_i^u$  ({\it resp.} $P_{i,v}^y$) stands for the face ({\it resp.} the slope)  
in the direction $u$ ({\it resp.} $y$) of $P_i$ ({\it resp.} $P_{i,v}$).

\medskip  
For each $v\in\P^1,$ set $I_v$ for the first sum. We have  
$$  
I_v= h(P_0;(1,0)) \MI_1\big(\vartheta_{1,v}|_{P_1^{(1,0)}}, \vartheta_{2,v}\big)  
+h(P_0;(0,1)) \MI_1\big(\vartheta_{1,v}, \vartheta_{2,v}|_{P_2^{(0,1)}}\big) .  
$$  
It turns out that $P_1^{(1,0)}$ must be equal to one of the points  
$(\sigma_1,0)$ or $(0,0)$ depending on the sign of $\sigma_1$. Similarly,  
${P_2^{(0,1)}}$ must be equal to either $(0,\sigma_2)$ or $(0,0)$ depending on the sign of $\sigma_2$.  
By~\cite[Formula~(8.3)]{PS07}  
$$  
I_v= |\sigma_1| \, \vartheta_{1,v}({P_1^{(1,0)}})+  
|\sigma_2|\, \vartheta_{2,v}({P_2^{(0,1)}})  
$$  
and so $\sum_v I_v=0$, because $\vartheta_{i,v}({P_i^{e_i}})$ equals the order of vanishing  
at $v$ of a rational function $q\in \K(t)^\times$,  
and the identity  
$\sum_{v\in \P^1} \ord_v(q)=0$ always holds.  
  
\medskip  
Set $J_v$ for the second sum, which consists in the only term corresponding to  
the vector $y\in S^2_+$ such that $P_{1,v}^y= P_{1,v}$ and $P_{2,v}^y=P_{2,v}$.  
Setting $V:= (\sigma_1,0,\ord_v(f))$ and  
$W:= (0,\sigma_2,\ord_v(g))$, their exterior product equals  
$$  
V\times W= (- \ord_v(f)\sigma_2, -\ord_v(g)\sigma_1, \sigma_1\sigma_2),  
$$  
and so the only relevant $y$ is  
$$  
y=  
\frac{ \sign(\sigma_1\sigma_2) }{ ||V\times W ||} \ V\times W.  
$$  
We have then $\MV_2(P_{1,v}^y,P_{2,v}^y)=||V\times W||_2$, hence  
\begin{align*}  
J_v&=h(P_0;y) \MV_2(P_{1,v}^y,P_{2,v}^y)\\[2mm]  
&= \max\{0,-\ord_v(f) \sign(\sigma_1\sigma_2) \sigma_2  
, -\ord_v(g) \sign(\sigma_1\sigma_2) \sigma_1  
\}.  
\end{align*}  
  
Finally, note that both sides of the identity are  continuous and  
homogeneous with respect to homotheties, which implies the general case  
 $\sigma_1,\sigma_2\in \R^\times$.  
\end{proof}

For each $v\in \P^1$ consider a rectangular triangle $R_v$ defined as  
$$  
R_v:=  
\Conv\big( (0,0), (-\ord_v(g),0),(0,-\ord_v(f))\big)  
$$  
if $\ord_v(f)\ord_v(g)\ge 0$ and  
$$  
R_v:=\Conv\big( (-\ord_v(g),-\ord_v(f)), (-\ord_v(g),0),(0,-\ord_v(f))\big)  
$$  
if $\ord_v(f)\ord_v(g)\le 0 $.  
Both definitions coincide in case $\ord_v(f)\ord_v(g)=0$.  
Note that $R_v=(0,0)$ for all but a finite number of $v$'s.  
The following result gives an explicit expression for the Minkowski difference of  
$\newton(C)$ with the minimal rectangle containing it.  
  
\begin{proposition}\label{Q-R}  
With the above notation, for $i=1,2$  
let $\pi_i:\R^2\to \R$ denote the projection to the $i$-th factor, then  
$$  
\newton(C)-\pi_1(\newton(C))\times\pi_2(\newton(C)) = \frac{1}{\deg(\rho)}  
\sum_{v\in \P^1} R_v .  
$$  
\end{proposition}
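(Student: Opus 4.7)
The plan is to verify the identity by comparing support functions on $\R^2$, where the left-hand side is interpreted via the rule $h(A - B;\, w) = h(A;\, w) + h(B;\, -w)$ valid for formal Minkowski differences of polygons (equivalently, as the Minkowski sum $\newton(C) + (-R)$ with $R := \pi_1(\newton(C)) \times \pi_2(\newton(C))$). Since both sides are piecewise linear in $w$, by continuity it suffices to establish the equality for $w = (\sigma_1, \sigma_2)$ with $\sigma_1\sigma_2 \ne 0$.

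At such a $w$, the fact that $R$ is an axis-aligned rectangle with the same projections as $\newton(C)$ gives $h(R;\, -w) = h(\newton(C);\, (-\sigma_1, 0)) + h(\newton(C);\, (0, -\sigma_2))$. Combining this with Proposition \ref{MI} yields
$$h\big(\newton(C) - R;\, w\big) = \frac{1}{\deg(\rho)}\sum_{v \in \P^1} \max\big\{0,\, -\ord_v(g)\,\epsilon\,\sigma_1,\, -\ord_v(f)\,\epsilon\,\sigma_2\big\},$$
with $\epsilon := \sign(\sigma_1\sigma_2)$. On the other hand, by additivity under Minkowski sum the support function of the right-hand side equals $\frac{1}{\deg(\rho)}\sum_v h(R_v;\, w)$.

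The core step is to compare these two sums. Reading off the vertices of $R_v$ in its two defining cases, I would verify by a sign-by-sign case analysis that the per-term discrepancy
$$\Delta_v(w) := h(R_v;\, w) - \max\big\{0,\, -\ord_v(g)\,\epsilon\,\sigma_1,\, -\ord_v(f)\,\epsilon\,\sigma_2\big\}$$
vanishes when $\sigma_1\sigma_2 > 0$ and equals $-\ord_v(g)\sigma_1 - \ord_v(f)\sigma_2$ when $\sigma_1\sigma_2 < 0$. Summing over $v$ and invoking the balancing condition $\sum_v \ord_v(\rho) = (0,0)$ recalled in the introduction, the total discrepancy vanishes in both cases, so the two support functions coincide.

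The main technical obstacle is the per-term discrepancy claim: it amounts to tracking the eight sign configurations of $(\sigma_1, \sigma_2)$ and of $(\ord_v(f), \ord_v(g))$ and checking that, for mixed signs $\sigma_1\sigma_2 < 0$, the extra vertex $(-\ord_v(g), -\ord_v(f))$ of $R_v$ appearing when $\ord_v(f)\ord_v(g) \le 0$ contributes exactly the correction $-\ord_v(g)\sigma_1 - \ord_v(f)\sigma_2$, while for $\sigma_1\sigma_2 > 0$ this vertex is dominated by the other two. Once this combinatorial bookkeeping is carried out, the proposition follows from the above identification of support functions.
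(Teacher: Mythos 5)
Your proposal is correct and follows essentially the same route as the paper: both reduce the statement to an identity of support functions evaluated at $\sigma\in(\R^\times)^2$, feed in Proposition~\ref{MI}, split according to the sign of $\sigma_1\sigma_2$, and use the balancing condition $\sum_v\ord_v(\rho)=(0,0)$ to absorb the correction term in the mixed-sign case; the paper's auxiliary triangles $T_v$ and $U_v$ are exactly the per-term maxima you compare $R_v$ against, and your claimed discrepancies ($0$ for $\sigma_1\sigma_2>0$, $-\ord_v(g)\sigma_1-\ord_v(f)\sigma_2$ for $\sigma_1\sigma_2<0$) check out via the observation that the maximum over the rectangle spanned by $(-\ord_v(g),0)$ and $(0,-\ord_v(f))$ is attained at one of these two common vertices.
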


\begin{proof}  
Set for short  
$$  
D:= \deg(\rho) \big( \newton(C)-\pi_1(\newton(C))\times\pi_2(\newton(C))\big) \subset \R^2.  
$$  
It suffices to show that the support function of both $D$ and  
 $\sum_{v} R_v $  
coincide for all  $\sigma\in (\R^\times)^2$.  
  
\smallskip  
In case  
$\sigma_1\sigma_2>0$ set  
$T_v:=\Conv \big( (0,0),  
 (-\ord_v(g),0), (0,-\ord_v(f)) \big)$, so that  
Proposition~\ref{MI} implies that  
$$  
h(D;\sigma) = \sum_v h(T_v; \sigma).  
$$  
When $\ord_v(f)\ord_v(g)\ge 0 $ we have that $T_v=R_v$,  
while if $\ord_v(f)\ord_v(g)\le 0 $, the maximum of the scalar product of $\sigma$  
over the rectangle $R_v\cup T_v$ is attained  
either at the point $(-\ord_v(g),0)$ or at $(0,-\ord_v(f))$,  
hence also in this case $h(T_v;\sigma)=h(R_v;\sigma)$.  
Thus  
$$  
h(D;\sigma) = \sum_v h(R_v; \sigma).  
$$  
\smallskip  
In case $\sigma_1\sigma_2<0$, we use the identities  
$$  
\sum_v\ord_v(f) = \sum_v \ord_v(g) =0,  
$$  
For $v\in \P^1$ set  
$$  
U_v:= \Conv\big(  
 (-\ord_v(g),-\ord_v(f)), (-\ord_v(g),0),(0,-\ord_v(f))\big),  
$$  
Proposition~\ref{MI} then implies that  
\begin{align*}  
h(D;\sigma)& = \sum_v \max \big\{ \langle\sigma , (0,0)\rangle,  
\langle\sigma , (\ord_v(g),0)\rangle,\langle\sigma, (0,\ord_v(f))  
 \rangle \big\} \\  
&\hspace{5mm}- \sum_v \langle \sigma, (\ord_v(g),\ord_v(f))\rangle \\[1mm]  
&=  \sum_v \max\big\{ \langle\sigma , (-\ord_v(g),-\ord_v(f))\rangle,  
\langle\sigma , (-\ord_v(g),0)\rangle,\langle \sigma, (0,-\ord_v(f))\rangle \big\}\\[1mm]  
&=  \sum_v h(U_v;\sigma).  
\end{align*}  
The rest of the argument is as in the preceding case:  
when $\ord_v(f)\ord_v(g)\le 0 $ we have that $U_v=R_v$,  
while if $\ord_v(f)\ord_v(g)\ge 0 $, the maximum of the scalar product of $\sigma$  
over the rectangle $R_v\cup U_v$ is attained  
either at the point $(-\ord_v(g),0)$ or at $(0,-\ord_v(f))$,  
hence also in this case $h(U_v;\sigma)=h(R_v;\sigma)$.  
We conclude that for all    $\sigma\in (\R^\times)^2$  
$$  
h(D;\sigma) = \sum_v h(R_v; \sigma),  
$$  
while on the other hand  
$  
h(\sum_v R_v ;\sigma)= \sum_v h(R_v ;\sigma)  
$  
by~\cite[Thm.~1.7.5, p.~41]{Sch93}, which concludes the proof.  
\end{proof}  
  
\begin{example}  
Consider the rational functions  
$$  
f(t):= \frac{t(t-2)}{(t-1)^2} \quad, \quad g(t):=\frac{t(t-1)^2}{(t-2)^3}.  
$$  
We have $\ord_0(f,g)=(1,1)$, $\ord_1(f,g)=(-2,2)$, $\ord_2(f,g)=(1,-3)$ and  
$\ord_v(f,g)=(0,0)$ for $v\neq 0,1,2$.  
Figure~\ref{fig:3} below illustrates the sum of the corresponding rectangular triangles.  
The dashed lines indicate the  
Newton polygon of the implicit equation for this example.  
  
\begin{figure}[htbp]
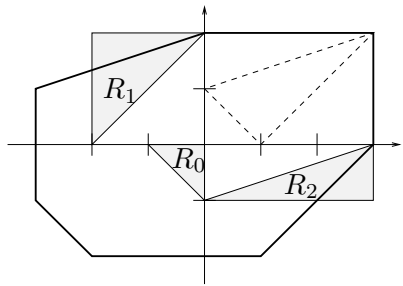
  
\input minkowski.pstex_t  
\vspace{-3mm}\caption{The Minkowski sum of the $R_v$'s}\label{fig:3}  
\end{figure}  
\end{example}  
  
The following lemma gives the minimal rectangle containing  
$\newton(C)$ and it can be found for instance,  
in~\cite[Thm.~6]{SW01}. It is also a consequence of  
Proposition~\ref{MI} above and for convenience of the reader we  
give its proof. For simplicity, we fix $E_C$ up to a non-zero  
scalar, by supposing it lies in $\K[x,y]$ and that neither $x$ nor  
$y$ divide it.  
  
\begin{lemma}\label{gradoparcial}  
With the above notation, we have  
$$\deg(\rho)\,\deg_x(E_C)= \eta(g) \quad ,\quad  
\deg(\rho)\,\deg_y(E_C)= \eta(f).  
$$  
\end{lemma}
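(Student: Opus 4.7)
The plan is to derive both identities by specializing Proposition~\ref{MI} to limits of $\sigma$ approaching the coordinate axes. Since $E_C\in\K[x,y]$ is not divisible by $x$ or $y$, the polygon $\newton(C)$ lies in $(\R_{\ge0})^2$ and meets both coordinate axes, so its support function satisfies
$$
h(\newton(C);(-1,0))=h(\newton(C);(0,-1))=0,\quad h(\newton(C);(1,0))=\deg_x(E_C),\quad h(\newton(C);(0,1))=\deg_y(E_C).
$$

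For the first identity, I would apply Proposition~\ref{MI} at $\sigma=(1,\epsilon)$ with $\epsilon>0$ and let $\epsilon\to 0^+$. The left-hand side reduces to $h(\newton(C);(1,\epsilon))+0+\epsilon\cdot 0$, which tends to $\deg_x(E_C)$ by continuity of the support function. Since $\sign(\sigma_1\sigma_2)=1$, the right-hand side is
$$
\frac{1}{\deg(\rho)}\sum_{v\in\P^1}\max\bigl\{0,\,-\ord_v(g),\,-\epsilon\,\ord_v(f)\bigr\},
$$
which tends to $\frac{1}{\deg(\rho)}\sum_{v}\max\{0,-\ord_v(g)\}$ as $\epsilon\to 0^+$ (only finitely many terms are nonzero).

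It then remains to identify $\sum_{v\in\P^1}\max\{0,-\ord_v(g)\}$ with $\eta(g)$. For $v\in\K$, coprimality of $g_\num,g_\den$ gives $\max\{0,-\ord_v(g)\}=\ord_v(g_\den)$, so the sum over finite places equals $\deg(g_\den)$; at $v=\infty$ the contribution is $\max\{0,\deg(g_\num)-\deg(g_\den)\}$. Adding these yields $\max\{\deg(g_\num),\deg(g_\den)\}=\eta(g)$, giving $\deg(\rho)\deg_x(E_C)=\eta(g)$. The second identity $\deg(\rho)\deg_y(E_C)=\eta(f)$ follows by the symmetric argument with $\sigma=(\epsilon,1)$.

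The argument is essentially a bookkeeping computation once Proposition~\ref{MI} is available; the only delicate point is justifying the limit $\epsilon\to 0^+$ on both sides, but this is immediate from continuity of the support function and of the finite sum on the right-hand side. So I do not anticipate any real obstacle here.
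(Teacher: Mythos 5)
Your proof is correct and follows essentially the same route as the paper: both apply Proposition~\ref{MI} with $\sigma_1=1$ and $\sigma_2\to 0^+$, use the normalization of $E_C$ to kill the terms $h(\newton(C);(-1,0))$ and $h(\newton(C);(0,-\sigma_2))$, and then evaluate $\sum_v\max\{0,-\ord_v(g)\}$ as $\deg(g_\den)+\max\{0,\deg(g_\num)-\deg(g_\den)\}=\eta(g)$. No gaps.
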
  
  
\begin{proof}  
With the given choice of $E_C$, we have that  
$$\deg_x(E_C)= h(\newton(C);(1,0) ) +  h(\newton(C); (-1,0))+  h(\newton(C); (0,0)).  
$$  
By Proposition~\ref{MI}  
applied to $\sigma_1=1$ and $\sigma_2\to 0^+$  
we obtain  
\begin{align*}  
{\deg(\rho)}\deg_x(E_C) & =  
\sum_{v\in \P^1} \max\{0, - \ord_v(g)\} \\[1mm]  
&= \max\{0, - \ord_\infty(g)\} +\sum_{v\in \A^1} \max\{0, - \ord_v(g)\}  
\\[1mm]  
&= \max\{0,\deg(g_\num)-\deg(g_\den)\}+ \deg(g_\den)  
 \\[2mm]  
&=\max\{ \deg(g_\num),\deg(g_\den)\}=\eta(g).  
\end{align*}  
The expression for $\deg_y(E_C)$ follows similarly.  
\end{proof}  
  
This lemma is equivalent to  
$$  
\pi_1(\newton(C)) = \frac{1}{\deg(\rho)}  
\Conv(0, \eta(g)) \quad ,\quad  
\pi_2(\newton(C))  = \frac{1}{\deg(\rho)}  
\Conv(0,\eta(f)).  
$$  
Now that we know the size of this minimal rectangle, we are in position to compute $\newton(C)$ by ``extracting'' it from the expression in Proposition~\ref{Q-R}.  
  
\medskip  
\begin{Proof}{Proof of theorem~\ref{mt}.}  
As a first step, we determine the edge structure of the polygon  
$$  
R:=\sum_{v\in \P^1} R_v .  
$$  
Let $v\in \P^1$. In case  
$\ord_v(f)\ord_v(g)>0$  
we have that  
$$  
\langle\ord_v(\rho), (-\ord_v(g),0) \rangle  
= \langle\ord_v(\rho), (0, -\ord_v(f)) \rangle  
= -\ord_v(f)\ord_v(g) < 0  
$$  
while $\langle\ord_v(\rho), (0,0) \rangle=0$.  
Hence for each  $v$, the rectangular triangle $R_v$ contributes to $R$  
with  
\begin{itemize}  
\item  one edge of length $||\ord_v(\rho)||$ and inner normal  
$\ord_v(\rho)$;  
\smallskip  
\item one vertical  edge of length $|\ord_v(g)| $  
and inner normal $(\sign(-\ord_v(f)), 0)$;  
\smallskip  
\item one horizontal edge of length $|\ord_v(f)| $ and inner normal  
$(0,\sign(-\ord_v(g)))$.  
\end{itemize}  
  
\smallskip  
In case  
$\ord_v(f)\ord_v(g)<0$,  
with a similar computation we can verify that $R_v$  contributes to $R$  
with  
\begin{itemize}  
\item  one edge of length $||\ord_v(\rho)||$ and inner normal  
$\ord_v(\rho)$;  
\smallskip  
\item one vertical  edge of length $|\ord_v(g)| $  
and inner normal $(\sign(-\ord_v(f)), 0)$;  
\smallskip  
\item one horizontal edge of length $|\ord_v(f)| $ and inner normal  
$(0,\sign(-\ord_v(g)))$;  
\end{itemize}  
exactly as in the first case.  
  
\smallskip  
If $\ord_v(f)=0$ and $\ord_v(g) \ne 0$, the triangle $R_v$ contributes to $R$  
with  
two horizontal edges of length $|\ord_v(g)|$, with inner  
normal $(0,1)$ and $(0,-1)$ respectively.  
Similarly, in case  
$\ord_v(g)=0$ and $\ord_v(f) \ne 0$,  the triangle $R_v$ contributes  
with  
two vertical edges of length $|\ord_v(f)|$, with inner  
normal $(1,0)$ and $(-1,0)$ respectively.  
  
\medskip  
Next we list the edges of $R$ as its inner normal runs over $S^1$:  
for $\theta\in S^1$, $\theta\ne (\pm1,0), (0,\pm1)$ as we saw above,  
the corresponding edge has length $\sum_v\|\ord_v(\rho)\|$,  
the sum being over all $v$ such that $\ord_v(\rho)\in\left(\R_{>0}\right)\theta$.  
\par

For $\theta=(0,1)$, the length of the corresponding horizontal edge of $R$  
equals  
$$  
\sum_{v : \ \ord_v(f)>0} -\ord_v(g)  
 + \sum_{v:\ \ord_v(g) =0,  
-\ord_v(f)<0} \ord_v(g) .  
$$  
By Lemma~\ref{gradoparcial},  
the corresponding edge in  
$-\deg(\rho) \big(  
\pi_1(\newton(C))\times\pi_2(\newton(C))\big)$  
has length  
$  
\sum_{v : \ \ord_v(f)>0} -\ord_v(g)$.  
This shows that the horizontal edge of $R$ with normal $(0,1)$ has length  
$$  
\sum_{v:\ \ord_v(g) =0,  
-\ord_v(f)<0} \ord_v(g)  
$$  
as stated.  
The remaining cases ($\theta=(\pm1,0), \theta=(0,-1)$) are treated similarly.  
\par\smallskip  
To conclude, we use the fact that the edges of the Minkowski sum of two polygons in $\R^2$, are exactly  
the concatenation  
of the edges of the given polygons, {\it see} the discussion in~\cite[pp.~144-145]{Sch93}.  
\end{Proof}  
  
\begin{remark} \label{bernardo}  
Theorem~\ref{mt} can easily be rephrased  
in the language of  
tropical geometry. Given an effective cycle $Z$ of $\T^n$, its  
{abstract tropical variety} is  
a pair $(\cT_Z,m_Z)$ where $\cT_Z\subset\R^n$ denotes the tropicalization of $Z$  
with respect to the trivial valuation of~$\K$  
and $m_Z:\cT^0_Z\to \N\setminus\{0\}$ is a locally constant  
function defined on the set of regular points $\cT^0_Z$ of $\cT_Z$, {\it see}~\cite{ST07}  
for the precise definitions.  
For an effective divisor $Z\in \Div(\T^2)$  
this data is equivalent to the Newton polygon: the tropicalization  
$\cT_Z$ consists in the union of the inner normal directions of $\newton(Z)$,  
$\cT^0_Z=\cT_Z\setminus\{(0,0)\}$ and for a point $b\ne (0,0)$ in one of these directions, $m_Z(b)$ equals the lattice length of the corresponding edge.  
Theorem~\ref{mt} is then equivalent to  
$$  
\cT_{\rho^*(\T^1)}=  
\bigcup_{v\in\P^1} (\R_{\ge0})\, \ord_v(\rho),  
$$  
and  
$  
m_{\rho^*(\T^1)}(b)= \sum_{v: \ \ord_v(\rho)\in (\R_{>0})b} \ell(\ord_v(\rho))$  
for  $b\in \cT_{\rho^*(\T^1)}^0$.  
In this form, the result can be found in the work of Dickenstein, Feichtner, Sturmfels and Tevelev: the tropicalization of $\rho^*(\T^1)$ is given by~\cite[Thm. 3.1]{DFS07} or~\cite[Thm. 3.1]{Tev07} while the corresponding  multiplicities can be obtained from~\cite[Thm.~1.1]{ST07}, {\it see} also~\cite[Rem.~3.2]{DFS07}.  
\end{remark}

\section{The Generic Cases}\label{degree}  
  
In this section we apply our main result to the computation of the Newton polygon in the generic cases  
(Corollaries~\ref{genericLaurent},  
\ref{genericrationalequaldenom} and~\ref{genericrational} in the introduction).  
The following result  
gives the degree of a generic parametrization and  
is an  
important ingredient  
in the proof of these corollaries.  
  
\begin{proposition}\label{grado generico}  
Let $(m_1,n_1),\dots, (m_R,n_R)\in \Z^2$  
and for  $\alpha,\beta\in \K^\times$  and $v_1,\dots, v_R\in \K$ set  
\begin{equation}\label{kk}  
f(t):=\alpha\prod_{i=1}^M(t-v_i)^{m_i}  
\quad, \quad  
g(t):=\beta\prod_{i=1}^M(t-v_i)^{n_i}.  
\end{equation}  
If there is a pair $1\le i\neq j\le R$ such that $(m_i,n_i)$ and $(m_j,n_j)$  
are not linearly dependent, then  the map  
$\T^1\dasharrow\T^2$, $ t\mapsto (f(t),g(t))$ is birational  
for generic $(v_i,v_j)\in\K^2.$  
\end{proposition}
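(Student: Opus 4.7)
The strategy is to combine Theorem~\ref{mt} with an explicit construction of generators of $\K(t)$ inside the subfield $\K(f,g)$. By Theorem~\ref{mt}, $\deg(\rho)\cdot \newton(\ov{\rho(\T^1)})=P((\ord_v\rho)_{v\in \P^1})$, and the right-hand side depends only on the multiplicity vectors $(m_k,n_k)$ and on which of the $v_k$'s are distinct from one another. Consequently, as the pair $(v_i,v_j)$ ranges over a dense open subset of $\K^2$, this polygon $P$ remains constant, and so does $\deg(\rho)$. Our task is thus to show this constant value equals~$1$, equivalently that $\K(f,g)=\K(t)$.

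Set $D:=m_in_j-m_jn_i$, which is non-zero by the linear independence hypothesis. Using (\ref{kk}), the two monomials
\[
\phi_i:=f^{n_j}g^{-m_j}=c_i\,(t-v_i)^D\,\psi_i(t),\qquad
\phi_j:=f^{-n_i}g^{m_i}=c_j\,(t-v_j)^D\,\psi_j(t)
\]
lie in $\K(f,g)$, where $c_i,c_j\in\K^\times$ and the rational functions $\psi_i,\psi_j\in \K(t)^\times$ are supported on the $v_k$ with $k\ne i,j$; in particular they do not depend on $v_i,v_j$. Every $\phi\in\K(f,g)\setminus\K$ satisfies $\deg(\rho)\mid \deg(\phi)$, where $\deg(\phi)$ is the degree of $\phi$ viewed as a rational map $\T^1\to\P^1$, since $\K(\phi)\subseteq \K(f,g)\subseteq \K(t)$ gives $\deg(\phi)=[\K(t):\K(\phi)]=\deg(\rho)\cdot [\K(f,g):\K(\phi)]$. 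Exhibiting two such $\phi$'s with coprime degrees will thus force $\deg(\rho)=1$.

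In the base case $R=2$ the rational functions $\psi_i,\psi_j$ are constant, so $(t-v_i)^D,(t-v_j)^D\in\K(f,g)$. Their difference equals $(v_j-v_i)\,Q(t)$ with
\[
Q(t)=\sum_{k=0}^{D-1}(t-v_i)^{D-1-k}(t-v_j)^k,
\]
which for generic $(v_i,v_j)$ has degree exactly $D-1$ (assuming $\Char(\K)\nmid D$; the residual-characteristic case is handled by a small variation on the linear combination). Hence $\deg(\rho)$ divides both $D$ and $D-1$, forcing $\deg(\rho)=1$.

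The main obstacle is extending this to arbitrary $R$, where for $R\ge 3$ the factors $\psi_i,\psi_j$ are non-trivial and the clean cancellation producing $Q$ no longer occurs. To circumvent this, the plan is to invoke upper-semicontinuity of the degree: the locus $\{(v_i,v_j)\in \K^2:\deg(\rho)=1\}$ is Zariski-open, so it suffices to produce a single birational specialization. This will be achieved by letting the auxiliary $v_k$'s degenerate in a controlled manner so as to reduce to the $R=2$ situation while preserving the linear independence of $(m_i,n_i)$ and $(m_j,n_j)$, or alternatively by combining $\phi_i,\phi_j$ with further monomials $f^ag^b$ attached to the remaining vectors so as to isolate, for generic $(v_i,v_j)$, a pair of elements of $\K(f,g)$ whose degrees as maps $\T^1\to \P^1$ are coprime.
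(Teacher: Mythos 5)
Your opening move coincides with the paper's: from the nonzero determinant $\Delta=m_in_j-m_jn_i$ you form $f^{n_j}g^{-m_j}=(t-v_i)^{\Delta}F(t)$ and $f^{-n_i}g^{m_i}=(t-v_j)^{\Delta}G(t)$ with $F,G$ independent of $v_i,v_j$. From there the arguments part ways, and yours is not complete. The divisibility mechanism ($\deg(\rho)$ divides $\eta(\phi)$ for every non-constant $\phi\in\K(f,g)$, so two elements of coprime height force $\deg(\rho)=1$) only closes the case $R=2$, where $F$ and $G$ are constants; you acknowledge this, and neither escape route you sketch for $R\ge 3$ is carried out. Moreover, the first route (degenerating the auxiliary $v_k$'s and invoking openness of the locus $\deg(\rho)=1$) cannot prove the statement as formulated: the proposition fixes $v_k$ for $k\ne i,j$ \emph{arbitrarily} and asks for genericity only in the pair $(v_i,v_j)$, so the remaining roots are not at your disposal, and openness gives nothing without a birational specialization on that particular slice. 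The second route is a hope rather than an argument: every monomial $f^ag^b$ drags the factors $F,G$ along, and there is no reason coprime heights should be available. (A smaller unjustified step: from the constancy of $P((\ord_v\rho)_v)$ you infer constancy of $\deg(\rho)$, which does not follow, since $\newton(\ov{\rho(\T^1)})$ may vary.)

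The paper closes the general case with a separation argument (Lemma~\ref{t-v}): for \emph{any} non-constant $p\in\K(t)$, any $q\in\K(t)^\times$ and $c\ne0$, the map $t\mapsto(p(t),(t-v)^cq(t))$ is birational for generic $v$, because the generic fiber of $p$ consists of $\eta(p)$ points and $(t-v)^cq(t)$ separates them for all but finitely many $v$ (if it identified $t_i\ne t_j$ for infinitely many $v$, the function $v\mapsto (t_j-v)/(t_i-v)$ would be constant); then $\deg_y(E_C)=\eta(p)$ combined with Lemma~\ref{gradoparcial} forces $\deg(\rho)=1$. Applied to $p=(t-v_i)^{\Delta}F$ and $q=G$, this absorbs the extraneous factors for arbitrary $R$ using only the single free parameter $v_j$, with no need for coprimality of degrees. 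Finally, your parenthetical "small variation" for $\Char(\K)\mid D$ is not small: if $p^e$ is the exact power of $p=\Char(\K)$ dividing $D$, the difference $(t-v_i)^D-(t-v_j)^D$ has degree $D-p^e$, which only bounds $\deg(\rho)$ by a power of $p$; and for $(m_i,n_i)=(p,0)$, $(m_j,n_j)=(0,p)$ one has $f,g\in\K(t^p)$, so the map genuinely fails to be birational. Inseparability is a real obstruction at exactly the point you wave at (the paper's own Lemma~\ref{t-v} tacitly assumes the generic fiber of $p$ is reduced), so no formal patch will do there.
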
  
  
In particular when the vectors $(m_1,n_1),\dots, (m_R,n_R)$ are not collinear, the  
pa\-ra\-me\-trization  defined by the functions in~(\ref{kk}) is birational for generic  
$v_1,\dots, v_R$.  
The case when all of the $(m_i,n_i)$'s are collinear is easy to handle:  
we then set  
$$  
(m_i,n_i)= k_i(m,n)  
$$  
for some  $m,n\in \Z$ with $\gcd(m,n)=1$ and $k_i\in \Z\setminus\{0\}$.  
Setting $h(t):=\prod_{i=1}^R(t-v_i)^{k_i}$  
the functions in~(\ref{kk}) can be written as  
$$  
f(t)= \alpha \,h(t)^m \quad, \quad g(t)=\beta \,h(t)^n ,  
$$  
and hence the degree of the map $\rho:t\mapsto (f(t),g(t)) $ equals to the degree of $t\mapsto h(t)$ and so $\deg(\rho)=\eta(h)$.  
In particular, for generic $v_1,\dots, v_R\in \K$  
\begin{equation}\label{display:1}  
\deg(\rho)=\max\Bigg\{ \sum_{i:\ k_i>0}k_i , -\hspace{-3mm}\sum_{i:\ k_i<0}k_i \Bigg\}.  
\end{equation}

\begin{lemma}\label{t-v}  
Let $p\in \K(t)\setminus\K$, $q\in \K(t)^\times$ and $c\in \Z\setminus\{0\}$,  
then the map  
$$  
\T^1\dasharrow \T^2\quad , \quad  
t\mapsto(p(t),(t-v)^c q(t))  
$$  
is birational for generic $v\in \K$.  
\end{lemma}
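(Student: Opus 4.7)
The plan is to describe the locus of $v$ for which $\rho_v$ is not birational as a proper closed subset of $\A^1_v$, via a dimension count on a suitable incidence variety.

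First I would dispose of the case $\eta(p)=1$: then $p$ is a M\"obius map, $\K(p)=\K(t)$, and $\rho_v$ is birational for every $v$. Henceforth assume $\eta(p)\ge 2$, so that
\[
Y:=\overline{\{(t,s)\in \A^2 : p(t)=p(s),\ t\ne s\}}\subset \A^2
\]
is a $1$-dimensional subvariety. The key observation is that $\rho_v$ is birational if and only if only finitely many $t\in\T^1$ admit some $s\ne t$ with $\rho_v(t)=\rho_v(s)$; indeed, any larger ``bad'' locus would be dense in the one-dimensional $\T^1$, forcing the generic fibre to have size $\ge 2$. To track these triples I introduce the incidence variety
\[
Z:=\bigl\{(v,t,s)\in \A^1\times Y : (t-v)^c q(t)=(s-v)^c q(s)\bigr\},
\]
with the two projections $\pi_v\colon Z\to \A^1_v$ and $\pi_Y\colon Z\to Y$.

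The first step is to show $\dim Z\le 1$. For any $(t_0,s_0)\in Y$ with $t_0\ne s_0$ and $q(t_0),q(s_0)\in \K^\times$ (a dense open condition on $Y$), the equation in $v$ cutting out $\pi_Y^{-1}(t_0,s_0)$ is non-trivial: after clearing denominators if $c<0$, it reads $q(t_0)(v-t_0)^{|c|}=q(s_0)(v-s_0)^{|c|}$, and the two sides are polynomials in $v$ with unique root loci $\{t_0\}$ and $\{s_0\}$ respectively, so equality as polynomials forces $t_0=s_0$. Hence $\pi_Y$ has finite generic fibres and $\dim Z\le \dim Y=1$.

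The conclusion follows from the fibre dimension theorem applied to $\pi_v$: if $\pi_v$ is not dominant, then $\overline{\pi_v(Z)}$ is a finite set and $\pi_v^{-1}(v)=\emptyset$ for $v$ outside it; if $\pi_v$ is dominant, then its generic fibre has dimension $\le \dim Z-\dim \A^1=0$, so $\pi_v^{-1}(v)$ is finite on a cofinite open subset of $\A^1$. In either case, for generic $v\in \K$ the fibre $\pi_v^{-1}(v)$ is finite, its image under $(v,t,s)\mapsto t$ is a finite subset of $\T^1$, and $\rho_v$ is injective on the complement of this finite set, hence birational. The main subtlety is the non-triviality of the equation defining $Z$ in the $v$-direction for generic $(t_0,s_0)\in Y$, settled by the explicit root comparison above; everything else is a direct application of elementary dimension theory.
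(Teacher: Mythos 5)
Your argument is correct, and it reaches the conclusion by a genuinely different route than the paper. The core computation is the same in both: for a fixed pair $t_0\ne s_0$ with $q(t_0),q(s_0)\in\K^\times$, the relation $(t_0-v)^cq(t_0)=(s_0-v)^cq(s_0)$ holds for at most finitely many $v$ (the paper deduces this from the non-constancy of $v\mapsto (s_0-v)/(t_0-v)$, you from comparing the root loci of the two sides after clearing denominators). The divergence is in how this is promoted to a statement about a generic $v$: the paper fixes a generic $x_0$, separates the fibre $p^{-1}(x_0)$ for $v$ outside a finite set depending on $x_0$, and then invokes Lemma~\ref{gradoparcial} --- i.e.\ the intersection-theoretic machinery of Section~\ref{proof} --- to turn $\Card(C\cap V(x-x_0))=\eta(p)$ into $\deg(\rho)=1$; you instead globalize over $Y$ via the incidence variety $Z$ and finish with elementary dimension theory. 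Your route is self-contained and handles cleanly the quantifier switch from ``for each generic $x_0$, a cofinite set of good $v$'' to ``for generic $v$, a finite bad locus,'' which the paper's order of quantifiers leaves implicit. Two small points to tighten. First, $Y$ may have dimension $<1$ (it is empty when $p$ is generically injective, e.g.\ inseparable $p$ in positive characteristic), and ``finite generic fibres of $\pi_Y$'' alone does not control components of $Z$ lying over the finitely many points of $Y$ where $t=s$ or $q$ vanishes or has a pole; you should note that every fibre of $\pi_Y$ sits inside $\A^1$, so those components still have dimension $\le 1$ and the bound $\dim Z\le 1$ survives. Second, your last step ``generically injective, hence birational'' is a separability assertion that can fail in characteristic $p$; this caveat is shared by the paper's own proof (which assumes $p^{-1}(x_0)$ has exactly $\eta(p)$ points) and is harmless under the paper's convention that $\deg(\rho)$ is the cardinality of a generic fibre, but it deserves a sentence.
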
  
  
\begin{proof}  
Let $x_0\in \T^1$ generic, then the number of different solutions to $p(t)=x_0$  
equals $\eta(p)$  
and we denote these solutions $t_1,\dots, t_{\eta(p)}$. As $x_0$ is generic, we will have  
$q(t_i),q(t_j)\ne 0$.  
Suppose that for some $i\ne j$ and almost all $v\in \K$ we have  
$  
(t_i-v)^c q(t_i)= (t_j-v)^c q(t_j)  
$  
or equivalently  
$$  
\left(\frac{t_j-v}{t_i-v}\right)^c = \frac{q(t_j)}{q(t_i)}  
$$  
for almost all $v\in \K$.  
This implies that the rational function $v\mapsto (t_j-v)/(t_i-v)$ is constant and so $t_i=t_j$, a contradiction.  
We deduce then that for each $i\ne j$  
the set  
$$  
\{v\in \K: (t_i-v)^c q(t_i)= (t_j-v)^c q(t_j)\}  
$$  
is finite and  
hence  
 for $v$ outside all of these finite sets,  
the rational function $(t-v)^cq(t)$ separates the points  
$t_1,\dots, t_{\eta(p)}$.  
Let $C\subset \T^2$ be the Zariski closure of the image of  
the map $t\mapsto (p(t),(t-v)^c q(t))$ and $E_C\in \K[x^{\pm1},y^{\pm1}]$ its defining equation, the previous considerations show that  
$$  
\deg_y(E_C)=\Card(C\cap V(x-x_0))= \eta(p)  
$$  
for generic $x_0\in \T^1$.  
But on  the other hand,  
$\eta(p)\ge 1$ because $p$ is not constant and  
Lemma~\ref{gradoparcial} shows that  
$\deg_y(E_C)=\deg(\rho)^{-1}\eta(p)$,  
which concludes the proof.  
\end{proof}

\begin{Proof}{Proof of proposition~\ref{grado generico}.}  
Setting  $\Delta:=\det\begin{pmatrix} m_i &m_j\\ n_i&n_j \end{pmatrix}\ne 0$  
we have  
$$f^{n_j}g^{-m_j} = (t-v_i)^\Delta  F(t) \quad ,\quad  
f^{-n_i}g^{m_i} =(t-v_j)^\Delta G (t)  
$$  
for some rational functions $F,G $ not depending on the parameters $v_i,v_j$.  
The functions  
$$  
p(t):=(t-v_i)^\Delta  F(t)  
\quad ,\quad  
q(t):=G(t)  
$$  
are in the hypothesis of Lemma~\ref{t-v}, because  
$p\notin \K$ for all $v_i\in \K$ except a finite number of exceptions. For those $v_i$'s, this lemma shows that the map  
$$  
t\mapsto  
((t-v_i)^\Delta  F(t),  
(t-v_2)^\Delta G(t) )=  
(f^{n_j}(t)g^{-m_j}(t),  
f^{-n_i}(t)g^{m_i}(t))  
$$  
is birational except for a finite number of $v_j$'s, and {\it a fortiori} the same is true for $t\mapsto (f(t),g(t))$.  
\end{Proof}  
  
\medskip  
The following result shows that for a given parametrization, the  
Newton polygon can be computed by using partial factorizations. This can simplify  
the application of Theorem~\ref{mt} in concrete situations.  
  
\begin{lemma}\label{practical}  
Let $\rho=(f,g)\in (\K(t)^\times)^2$ and write  
\begin{equation}\label{factorizations}f  
(t)= \alpha \prod_{p\in \cP} p(t)^{d_p}  
\quad,\quad  
g(t)= \beta \prod_{p\in \cP} p(t)^{e_p}  
\end{equation}  
for a finite set $\cP\subset \K[t]$ of pairwise coprime polynomials, $d_p,e_p\in \N$  
and $\alpha,\beta\in \K^\times$, then  
$$  
P\big((\ord_v(\rho))_{v\in \P^1}\big) = P\big(\deg(p)(d_p, e_p))_{p\in \cP}, (\deg(f),\deg(g))\big).  
$$  
\end{lemma}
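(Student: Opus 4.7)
The plan is to invoke the characterization of $P(B)$ recalled in the introduction: for a balanced family $B$, the polygon $P(B)$ is determined up to translation by its set of inner normal directions (the directions of the non-zero vectors of $B$) together with, for each such direction $u$, the length of the corresponding edge, which equals $\sum_{b \in B \cap \R_{>0} u} \|b\|$. It will therefore suffice to show that the two families
$$
B_1 := (\ord_v(\rho))_{v \in \P^1}, \qquad
B_2 := \big\{\deg(p)(d_p, e_p)\big\}_{p \in \cP} \cup \big\{(-\deg(f), -\deg(g))\big\}
$$
produce the same aggregate length in each direction of $\R^2$.

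First I would read off $\ord_v(\rho)$ directly from~(\ref{factorizations}): because the polynomials in $\cP$ are pairwise coprime, each $v \in \A^1$ is a root of at most one $p \in \cP$, and if it is a root of $p$ with multiplicity $\ord_v(p)$ then $\ord_v(f) = d_p \ord_v(p)$ and $\ord_v(g) = e_p \ord_v(p)$, so $\ord_v(\rho) = \ord_v(p)(d_p, e_p)$; for all other $v \in \A^1$ we have $\ord_v(\rho) = (0,0)$. Finally $\ord_\infty(\rho) = (-\deg(f), -\deg(g))$.

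The decisive step is then this: for each fixed $p \in \cP$, all the non-zero vectors $\ord_v(p)(d_p, e_p)$ with $v$ a root of $p$ are positive scalar multiples of $(d_p, e_p)$, and the identity $\sum_{v : p(v) = 0} \ord_v(p) = \deg(p)$ shows that their lengths sum to $\deg(p)\|(d_p, e_p)\|$, which is exactly the length of the single vector $\deg(p)(d_p,e_p)$ contributed to $B_2$. Together with the trivial matching of the infinity terms, this will give the required agreement of aggregate lengths in every direction, and the lemma will follow from the characterization above.

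I do not foresee any serious obstacle; the one point deserving attention is that a priori several of the vectors $(d_p,e_p)$ for distinct $p$, or one of them together with the infinity vector $(-\deg(f),-\deg(g))$, may be parallel. This causes no real difficulty, since the edge-length formula sums \emph{all} vectors pointing in a given direction regardless of their indexing, so parallel contributions from $B_2$ simply merge into the same edge in $P(B_2)$ and the pairing described above continues to work.
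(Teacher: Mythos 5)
Your argument is correct and essentially identical to the paper's: both read off $\ord_v(\rho)=\ord_v(p)\,(d_p,e_p)$ from the pairwise-coprime factorization, observe that the vectors attached to the roots of a single $p\in\cP$ all lie in the direction $(\R_{>0})(d_p,e_p)$ and hence merge into the single contribution $\deg(p)(d_p,e_p)$, and then match the term at infinity (your explicit remark about distinct $p$'s possibly sharing a direction is a harmless refinement of the same idea). The only discrepancy is that you (correctly) take the vector at infinity to be $\ord_\infty(\rho)=(-\deg(f),-\deg(g))$, whereas the printed statement and the paper's own proof write $(\deg(f),\deg(g))$ --- a sign slip in the paper, since with that sign the right-hand family would fail the balancing condition needed for $P(\cdot)$ to be defined.
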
  
  
\begin{proof}  
Let $p\in \cP$ and $v\in \K$ be a root of $p$ of multiplicity $m_v\ge 1$, then  
$$  
\ord_v(\rho)=m_v(d_p,e_p) \in (\R_{>0})(d_p,e_p).  
$$  
Hence all of the multiplicities of $\rho$ at roots of a given $p\in \cP$ lie in the same direction and so in the construction of  
$P\big((\ord_v(\rho))_{v\in \P^1}\big)$  they concatenate together  
into the single vector  
$$  
\sum_{v\in \K: \ p(v)=0}\ord_v(\rho)=\deg(p)(d_p,e_p),  
$$  
while on the other hand $\ord_\infty(\rho)= (\deg(f),\deg(g)) $.  
The fact that the polynomials in $\cP$ are pairwise coprime ensures that  
each non-zero  vector  
in the family $(\ord_v(\rho))_{v\in \P^1}$ appears exactly one time as a term of  
$ \deg(p)(d_p,e_p)$ for $p\in \cP$ or as  $(\deg(f),\deg(g)) $, which concludes the proof.  
\end{proof}

\medskip  
\begin{Proof}{Proof of Corollaries~\ref{genericLaurent},  
\ref{genericrationalequaldenom} and~\ref{genericrational}.}  
Corollary~\ref{genericLaurent} is contained both in  
Corollary~\ref{genericrationalequaldenom} and  
Corollary~\ref{genericrational},  so it suffices to prove these two  
statements.  
  
\smallskip  
Set  
$  
\wt p(t):=\alpha_d+\alpha_{d+1}t + \cdots+\alpha_Dt^{D-d} $ and  
$\wt q(t):=\beta_e+\beta_1t +\cdots+\beta_Et^{E-e} $  
so that the parametrization  $  
\rho=(f,g)$  
in Corollary~\ref{genericrationalequaldenom} factorizes as  
\begin{equation}\label{display:2}  
f(t)=t^d \, \wt p(t) \, r(t)^{-1} \quad ,\quad g(t)= t^e \, \wt q(t) \,  
r(t)^{-1} .  
\end{equation}  
In case $p,q,r$ are pairwise coprime, Theorem~\ref{mt} together with Lemma~\ref{practical} above readily imply that  
\begin{align}\label{newton_genericrationalequaldenom}  
\newton\big({\rho^*(\T^1)}\big)  
= P\big(& (\ord_0(f),\ord_0(f,g))  
, (\deg(\wt p), 0), (0,\deg(\wt q)), \nonumber\\  
& (-\deg(r), -\deg(r)), (\deg(f),\deg(g))\big) \nonumber \\[1mm]  
= P\big(&(d,e)  
,(D-d,0),(0,E-e),  
(-F,-F), (F-D,F-E)\big) .  
\end{align}  
Reciprocally, we will show that the  
equality~(\ref{newton_genericrationalequaldenom}) above  
holds only if $p,q,r$ are pairwise coprime. To see this, note first that  
$\ord_0(\rho)=(d,e)$ and $\ord_\infty(\rho)=(F-D,F-E)$ because  
$\alpha_d,\alpha_D, \beta_e,\beta_E\ne 0$.  
For rational functions of the form~(\ref{display:2}),  
the only factor which might contribute to the direction $(1,0)$  
is the polynomial $\wt p$ and it does contribute with the vector $(D-d,0)$  
if and only if  
$\wt p$ is coprime with both  $r$ and $\wt q$.  
Similarly, the presence of the vector $(0,E-e)$ implies that $\wt q$ is coprime with both  $r$ and $\wt p$ and so we conclude that   $p,q,r$ are pairwise coprime.  
  
\smallskip  
For the second part of the statement,  
assume that $(D-d,0),(0,E-e), (d,e),(F,F)$ are not collinear and  
for $v_1, v_2,v_3\in \K$  
consider the parametrization $\rho$ associated to the specialization  
$$  
p(t):= t^d(t-v_2)^{D-d} \quad , \quad  
q(t):=t^e(t-v_3)^{E-e} \quad , \quad  
r(t):=(t-v_4)^F.  
$$  
In case $(D-d,0),(0,E-e), (d,e)$ are not collinear,  
Proposition~\ref{grado generico} readily implies that  
$\rho$ is birational for generic $v_1,v_2,v_3$.  
If this is not the case, we have that $(d,e)$ and at least one of the vectors  
$(D-d,0),(0,E-e), (d,e)$ are linearly independent; we note $1\le i\le 3$ the corresponding index. Letting  
$j\ne i$ another index,  
we apply Proposition~\ref{grado generico} to  
the parametrization $\rho(t+v_j)$ and we also obtain that  
$\rho $ is birational for generic  
$v_1,v_2,v_3$.  
This readily implies that $\rho$ is birational for a generic choice of  
$p,q,r$ because  $\deg(\rho)=1$ is an open condition.  
  
\smallskip  
The proof of  Corollary~\ref{genericrational} is analogous and we leave it to the interested reader.  
\end{Proof}  
  
\medskip  
\section{The Variety of Rational Plane  
Curves with Given Newton Polygon} \label{variedad}  
  
This section is devoted to the proof of Theorem~\ref{M_Q} and more generally  
to the study of the variety $M_Q$.  
In particular, we show that every non-degenerate polygon is the Newton polygon of a rational plane curve.  
Similarly, we also determine which polygons  
can be the Newton polygon of a curve parameterized  
by polynomials or by Laurent polynomials.

\bigskip  
Throughout this section we denote by $Q$  
an arbitrary non-degenerate convex lattice polygon of $\R^2$.  
Setting $\tJ:=\Card(Q\cap\Z^2)-1$,  
we identify the space of Laurent polynomials with support contained in $Q$  
with  
$\K^{\tJ+1}$ and the set of effective Weil divisors of $\T^2$  
with Newton polygon contained in $Q$, with $\P^\tJ$.  
As defined in the introduction, the set  
$M_Q^\circ\subset \P^\tJ$  
consists in the effective divisors of  
$\T^2$  
of the form $\delta[C]$ for a rational curve $C$ and $\delta\ge1$ such that  
$\delta\newton(C)=Q$.  
We denote by $M_Q\subset\P^\tJ$  the Zariski  
closure of this set.  
  
\smallskip  
In the sequel we construct the space of rational parametrizations corresponding to  
the divisors in~$M_Q^\circ$.  
Let $(m_1,n_1), \dots, (m_r,n_r)\in \Z^2\setminus\{(0,0)\}$ be  
the primitive inner normal vectors of $Q$  
and  
$\ell_1,\ldots,\ell_r\ge 1$ the lattice length of the corresponding  
edge.  
Given a  point  $p_i=(p_{i,0}:\cdots:p_{i,\ell_i})  
\in \P^{\ell_i}$,  we identify it  
with the associated monic  polynomial $p_i(t)\in \K[t]$  
of degree $\le \ell_i$. In concrete terms,  
$$  
p_i(t):=p_{i,k}^{-1}\, p_{i,0}+\cdots +p_{i,k}^{-1}\, p_{i,k-1}t^{k-1}+t^k  
$$  
for the maximal index $0\le k\le \ell_i$ such that  
$p_{i,k}\ne 0$.  
We then consider the dense open subset  
$$  
U_Q  
\subset \T^2\times \prod_{i=1}^r \P^{\ell_i}  
$$  
of points  $  
(\alpha,\beta,p_1,\dots ,p_r) $ such that the homogeneous  
associated polynomials  
$$  
p_i^\h(t_0,t_1):=t_0^{\ell_i}\, p_i(t_0^{-1}t_1)\in \K[t_0,t_1]  
\quad \mbox{ for  } 1\le i\le r  
$$  
are pairwise coprime.  
To each $\bfu=(\alpha,\beta,p_1,\dots ,p_r)  
\in U_Q$ we associate  a parametrization $\rho_\bfu=(f_\bfu,g_\bfu)\in \K(t)^2$  
defined by  
\begin{equation}\label{eq:1}  
f_\bfu(t):=\alpha\,\prod_{i=1}^r p_i(t)^{m_i}  
\quad ,\quad  
g_\bfu( t):=\beta\,\prod_{i=1}^r p_i(t)^{n_i}.  
\end{equation}  
  
\begin{lemma}\label{parameters}  
The map  
$  
U_Q\to \K(t)^2 , \bfu\mapsto \rho_\bfu$  
is a one-to-one correspondence between $U_Q$ and the set of rational maps  
$\rho:\T^1\dashrightarrow \T^2$ such that $\newton(\rho^*(\T^1))=Q$.  
\end{lemma}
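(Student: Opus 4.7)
The plan is to verify three things: that $\bfu\mapsto\rho_\bfu$ lands in the set of rational maps with $\newton(\rho^*(\T^1))=Q$, that this assignment is injective, and that it is surjective.

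For well-definedness, I would first compute the multiplicities $\ord_v(\rho_\bfu)$ from the factorization~(\ref{eq:1}). Pairwise coprimality of the $p_i$ in $\K[t]$ (implied by the hypothesis $\bfu\in U_Q$) gives $\ord_v(\rho_\bfu)=k_v(m_i,n_i)$ whenever $v\in\A^1$ is a root of $p_i$ of multiplicity $k_v\ge 1$, and $(0,0)$ otherwise. The balancing $\sum_v\ord_v(\rho_\bfu)=(0,0)$ combined with the balancing relation $\sum_i\ell_i(m_i,n_i)=(0,0)$ of the closed polygon $Q$ yields $\ord_\infty(\rho_\bfu)=\sum_i(\ell_i-\deg(p_i))(m_i,n_i)$. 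At this point I would use the coprimality of the homogenizations $p_i^\h$, each of degree $\ell_i$ and vanishing at $\infty$ with multiplicity $\ell_i-\deg(p_i)$: this forces at most one index $i_0$ to satisfy $\deg(p_{i_0})<\ell_{i_0}$, so $\ord_\infty(\rho_\bfu)$ either vanishes or lies entirely in a single primitive direction $(m_{i_0},n_{i_0})$. Summing the contributions in each direction gives total lattice length exactly $\ell_i$, and Theorem~\ref{mt} concludes $\newton(\rho_\bfu^*(\T^1))=Q$.

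For injectivity, I would note that $\rho_\bfu$ determines all multiplicities $\ord_v(\rho_\bfu)$ intrinsically; each monic $p_i$ is then reconstructed as $\prod(t-v)^{k_v}$ over those $v\in\A^1$ with $\ord_v(\rho_\bfu)=k_v(m_i,n_i)\in\R_{>0}(m_i,n_i)$, and $\alpha,\beta$ are the unique constants realizing $f_\bfu\prod_i p_i^{-m_i}$ and $g_\bfu\prod_i p_i^{-n_i}$. For surjectivity, given $\rho$ with $\newton(\rho^*(\T^1))=Q$, Theorem~\ref{mt} tells me the nonzero multiplicities of $\rho$ lie in the primitive directions $(m_i,n_i)$ with lattice length totals $\ell_i$; the same formulas produce $p_i$'s with $\deg(p_i)\le\ell_i$, strict inequality holding for at most one $i_0$ (determined by the direction of $\ord_\infty(\rho)$ when nonzero), so the $p_i^\h$'s are automatically pairwise coprime in $\P^1$. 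This yields a $\bfu\in U_Q$ with $\rho_\bfu=\rho$.

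The main obstacle will be the well-definedness step, specifically translating pairwise coprimality of the $p_i^\h$'s in $\P^1$ into the combinatorial statement that $\ord_\infty(\rho_\bfu)$ concentrates in a single primitive direction of $Q$ and exactly fills the gap $\ell_i-\deg(p_i)$, so that every edge of $P((\ord_v(\rho_\bfu))_v)$ has the prescribed lattice length $\ell_i$.
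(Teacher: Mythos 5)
Your proposal is correct and follows essentially the same route as the paper: both directions are reduced, via Theorem~\ref{mt}, to matching the multiplicities $\ord_v(\rho)$ against the roots of the $p_i^\h$'s, and your key observation --- that pairwise coprimality of the $p_i^\h$'s forces at most one of them to vanish at $\infty$, so $\ord_\infty(\rho_\bfu)$ lies in a single primitive direction and fills the gap $\ell_{i_0}-\deg(p_{i_0})$ --- is precisely what makes each edge have lattice length $\ell_i$. The only (valid) variation is in the injectivity step, where you reconstruct each $p_i$ directly from the intrinsic multiplicities of $\rho_\bfu$, while the paper instead compares the products $f_\bfu^{n_i}g_\bfu^{-m_i}$ for $\bfu$ and $\bfu'$.
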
  
  
\begin{proof}  
According to Theorem \ref{mt},  
a rational map $\rho:\T^1\dashrightarrow \T^2$  
corresponds to a member of $M_Q^\circ$  
if and only if for all $v\in \P^1$ there is and index $i$ between $1$ and $r$  
such that  
$  
\ord_v(\rho) \in (\R_{\ge 0})(m_i,n_i)  
$  
and  
$$  
{\sum_{v}} \ell(\ord_v(\rho))=\ell_i  
\quad \mbox{ for } 1\le i\le r ,  
$$  
the sum being over $v\in \P^1$ such that $\ord_v(\rho) \in (\R_{>0})(m_i,n_i)$.  
Every such map can be written as  
$\rho_\bfu$ for $\bfu=(\alpha,\beta,p_1,\dots, p_r)\in U_Q$, with  
$(\alpha,\beta)=\init_\infty(\rho)$ and  
$$  
p_i(t)=\prod_{v\ne \infty}  
(t-v)^{\ell(\ord_v(\rho))} \quad \mbox{ for } 1\le i\le r,  
$$  
this time the index variable runs over $v\in \K$  
such that $\ord_v(\rho) \in (\R_{>0})(m_i,n_i)$.  
Reciprocally,  
the map $\rho_\bfu$ corresponding to a point  
 $\bfu=(\alpha,\beta,p_1,\dots, p_r)\in U_Q$ verifies the above conditions:  
we have that $\ord_v(\rho_\bfu)\ne (0,0)$ if and only if $v\in V(p_i^\h)$ for  
$1\le i\le r$ and in that case,  
$\ord_v(\rho)=\ord_v(p_i^\h)(m_i,n_i) \in (\R_{>0})(m_i,n_i)$.  
Moreover  
$$  
\sum_{v\in V(p_i^\h)}  \ell(\ord_v(\rho_\bfu))  
=  
\sum_{v\in V(p_i^\h)}  
\ord_v(p_i^\h) =\ell_i  
$$  
and so $\newton(\rho_\bfu^*(\T^1))=Q$.

It only remains to show that the correspondence $\bfu\mapsto \rho_\bfu$ is  
injective:  
let $\bfu=(\alpha,\beta,p_1,\dots, p_r)  
,\bfu'=(\alpha',\beta',p_1',\dots, p_r')\in U_Q$  
such that $\rho_\bfu=\rho_{\bfu'}$.  
For $i\ne j$ set  
$\Delta_{i,j}:=\det\begin{pmatrix} m_i &m_j\\ n_i&n_j \end{pmatrix}\ne 0$,  
so that for $1\le i\le r$  
$$  
\alpha^{n_i}\beta^{-m_i}  
\prod_{j\ne i}p_j^{\Delta_{i,j}}  
=  
f_\bfu^{n_i}g_\bfu^{-m_i}  
=f_{\bfu'}^{n_i}g_{\bfu'}^{-m_i}  
= (\alpha')^{n_i}(\beta')^{-m_i}  
\prod_{j\ne i}(p_j')^{\Delta_{i,j}}.  
$$  
This shows that  
$p_i(t)$ and $p_j'(t)$ are coprime for $i\ne j$ and so  
$p_i(t)=p_i'(t)$ for all $i$.  
Moreover  
$  
(\alpha,\beta)=\init_\infty(\rho_\bfu)=\init_\infty(\rho_{\bfu'})=(\alpha',\beta')  
$  
and we conclude that $\bfu=\bfu'$, as desired.  
\end{proof}  
  
Proposition~\ref{grado generico} together with this Lemma readily imply  
that the parametrization $\rho_\bfu$ is birational for generic $\bfu\in U_Q$.  
  
\begin{corollary}\label{degreeQ}  
Let $Q$ be a non-degenerate convex lattice polygon,  
then  $\deg(\rho_\bfu)=1$ for $\bfu$ in a dense open subset of $U_Q$.  
\end{corollary}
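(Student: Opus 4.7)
The plan is to exhibit a single point $\bfu_0 \in U_Q$ at which $\rho_{\bfu_0}$ is birational and then upgrade this to a dense open subset using irreducibility of $U_Q$. Since $U_Q$ is a dense open subset of the irreducible product $\T^2 \times \prod_{i=1}^r \P^{\ell_i}$, it is itself irreducible. In the family of rational maps $\bfu \mapsto \rho_\bfu$, the degree is upper-semicontinuous on the base, so $\{\bfu \in U_Q : \deg(\rho_\bfu) = 1\}$ is open, and any nonempty open subset of $U_Q$ is automatically dense. Thus the task reduces to producing one $\bfu_0$ with birational $\rho_{\bfu_0}$.

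To construct $\bfu_0$, I would specialize each $p_i$ to a squarefree polynomial by picking $\ell_i$ scalars $v_{i,1},\ldots,v_{i,\ell_i} \in \K$, chosen so that the full family $\{v_{i,k}\}_{i,k}$ consists of pairwise distinct elements, and setting $p_i(t) := \prod_{k=1}^{\ell_i}(t - v_{i,k})$. The homogeneous polynomials $p_i^\h$ are then pairwise coprime, so for any $(\alpha,\beta) \in \T^2$ the tuple $\bfu_0 := (\alpha,\beta,p_1,\dots,p_r)$ belongs to $U_Q$. Expanding formula~(\ref{eq:1}) gives
$$f_{\bfu_0}(t) = \alpha \prod_{i,k}(t - v_{i,k})^{m_i}, \qquad g_{\bfu_0}(t) = \beta \prod_{i,k}(t - v_{i,k})^{n_i},$$
which is precisely the shape required by Proposition~\ref{grado generico}, with the vector $(m_i,n_i)$ attached to each root $v_{i,k}$ of $p_i$.

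The decisive input is the non-degeneracy of $Q$: a two-dimensional lattice polygon has at least three edges, so among the primitive inner normal vectors $(m_1,n_1),\ldots,(m_r,n_r)$ at least two, say $(m_i,n_i)$ and $(m_j,n_j)$ with $i \ne j$, are linearly independent. Proposition~\ref{grado generico} then guarantees that $\rho_{\bfu_0}$ is birational for generic values of the pair $(v_{i,1},v_{j,1})$, and such a choice furnishes the required $\bfu_0$. The only obstacle is a bookkeeping one: matching the factored form of $\rho_{\bfu_0}$ against the hypothesis of Proposition~\ref{grado generico} and checking that $\bfu_0 \in U_Q$. Both verifications are immediate from the construction, so the proof reduces to the two cited inputs plus the openness remark.
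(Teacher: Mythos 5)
Your proposal is correct and follows essentially the same route as the paper: the paper's proof is the one-line observation that Lemma~\ref{parameters} puts $\rho_\bfu$ in the factored form~(\ref{kk}) and that Proposition~\ref{grado generico}, applicable because a non-degenerate polygon has at least two linearly independent primitive inner normals, then gives birationality generically. Your extra care about openness of the degree-one locus matches the paper's own (also unproved) appeal to ``$\deg(\rho)=1$ is an open condition.''
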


\medskip  
Different parametrizations might define the same push-forward cycle.  
To avoid most of this redundancy, we introduce an equivalence relation by  
agreeing that  
two pa\-ra\-me\-tri\-zations  $\rho, \rho':\T^1\dashrightarrow \T^2$ are  
{\it equivalent} if there exists a birational automorphism  
$\mu$ of $\T^1$ such that  
$\rho'=\rho\circ \mu$. The  
birational automorphisms  
of $\T^1$ are the M\"obius transformations, that is the  
maps of the form  
$\T^1\dashrightarrow \T^1,  
t\mapsto\frac{\alpha t+\beta}{\gamma t+\delta}$ for some  
$\alpha,\beta,\gamma,\delta\in\K$ such that $\alpha\delta-\beta\gamma\ne 0$.  
We then set  
$$  
P_Q:=U_Q/\sim  
$$  
for the space of parametrizations of effective divisors in $M_Q^\circ$ modulo equivalence.  
The assumption that $Q$ is non-degenerate implies that it  
has at least three different edges, that is $r\ge 3$.  
Given any  three different  
points of $\P^1$, there is a unique M\"obius transformation  
which brings them to $0,1,\infty$ respectively.  
We can thus fix a system of representatives of $P_Q$ as the set of points  
 $\bfu=(\alpha,\beta,p_1,\dots ,p_r)  
\in U_Q$ such that the polynomials $p_1(t)$, $p_2(t)$ and $p_3(t)$  
have respectively $0$, $1$ and $\infty$ as a root.  
This identifies  
$P_Q$ with a subset of $U_Q$. We can also verify that $P_Q$  
is linearly isomorphic to  
a dense open subset of $$\T^2\times\prod_{i=1}^3\P^{\ell_i-1}\times\prod_{i=4}^r\P^{\ell_i}.$$

\smallskip  
It is well-known that the equation of the  
cycle $\rho^*(\T^1)$ can be written  
in terms of the Sylvester resultant.  
Set  
$$  
D := \sum_{i: \ m_i>0} \ell_im_i =- \hspace{-2mm}\sum_{i: \ m_i<0} \ell_im_i  
\quad ,\quad  
E:=\sum_{i: \ n_i>0} \ell_in_i =- \hspace{-2mm} \sum_{i: \ n_i<0} \ell_in_i ,  
$$  
these equalities being a consequence of the balancing condition  
$\sum_{i=1}^r\ell_i(m_i,n_i)=(0,0)$.  
We then consider the map $ U_Q \to \K[x^{\pm 1},y^{\pm1}]$ defined by  
\begin{equation} \label{eq:2}  
\rho_\bfu=(f,g) \mapsto  
x^ay^b\Res_{D,E} (f_\den(t)\, x-f_\num(t),g_\den(t)\, y-g_\num(t);t)  
\end{equation}  
where $\Res_{D,E}$ stands for the Sylvester resultant two univariate polynomials  
of degree $D$ and $E$ respectively and  $a,b\in \Z$ are maximal  
so that $Q-(a,b)\in (\R_{\ge0})^2$.  
Note that for $\bfu \in U_Q$ we have $\eta(f_\bfu)= D$ and $\eta(g_\bfu)=E$ and  
so $$  
\deg_t(f_\den(t)\, x-f_\num(t))= D \quad , \quad  
\deg_t(g_\den(t)\, y-g_\num(t))=E,  
$$  
hence the map~(\ref{eq:2}) is well-defined on the whole of $U_Q$.  
  
\begin{proposition}\label{rational map}  
With the above notation, Formula~(\ref{eq:2}) induces  
a surjective regular map  
$\Xi:P_Q\to M_Q^\circ$.  
The fiber of $\Xi$ at $\delta[C]\in M_Q^\circ$  
is an irreducible variety of dimension ${2\delta-2}$.  
\end{proposition}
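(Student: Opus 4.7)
The plan is to verify that Formula~(\ref{eq:2}) descends from $U_Q$ to a regular map $\Xi: P_Q \to M_Q^\circ$, establish surjectivity using the parametrization of $U_Q$ furnished by Lemma~\ref{parameters}, and then identify the fiber over $\delta[C]$ with the quotient of the space of degree-$\delta$ self-maps of $\P^1$ by the $\mathrm{PGL}_2$-action given by precomposition.

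For well-definedness and regularity of $\Xi$, I would first observe that the coefficients of $f_\num, f_\den, g_\num, g_\den$ depend polynomially on $\alpha, \beta$ and on the coefficients of $p_1, \ldots, p_r$, so the Sylvester resultant in~(\ref{eq:2}) gives a regular map $U_Q \to \K[x^{\pm1}, y^{\pm 1}]$ modulo scalars, which descends to $\T^2 \times \prod_i \P^{\ell_i}$ by the bi-homogeneity of the resultant in the coefficients of each $p_i$. By the standard Poisson product formula applied to $f_\den(t)x - f_\num(t)$ and $g_\den(t)y - g_\num(t)$ as polynomials in $t$, this resultant coincides up to a nonzero scalar and a monomial in $x, y$ with a defining equation of the push-forward cycle $\rho_\bfu^*(\T^1)$. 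By Lemma~\ref{parameters} the latter has Newton polygon~$Q$, so the prefactor $x^a y^b$ translates the output into the prescribed embedding, confirming $\Xi(\bfu) \in M_Q^\circ$. If $\bfu' \sim \bfu$, that is $\rho_{\bfu'} = \rho_\bfu \circ \mu$ for a M\"obius transformation $\mu$, the push-forward cycles coincide, so $\Xi$ factors through~$P_Q$.

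For surjectivity, given $\delta[C] \in M_Q^\circ$, I would choose a birational parametrization $\rho_0: \T^1 \dashrightarrow C$ (which exists since $C$ is rational) and an arbitrary rational self-map $\phi: \T^1 \dashrightarrow \T^1$ of degree $\delta$. Then $\rho := \rho_0 \circ \phi$ has image $C$ and degree $\delta$, hence $\rho^*(\T^1) = \delta[C]$. Theorem~\ref{mt} gives $\newton(\rho^*(\T^1)) = \delta\,\newton(C) = Q$, and Lemma~\ref{parameters} then produces $\bfu \in U_Q$ with $\rho_\bfu = \rho$, yielding $\Xi([\bfu]) = \delta[C]$.

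For the fiber analysis, fix $\delta[C] \in M_Q^\circ$ and a birational parametrization $\rho_0: \P^1 \dashrightarrow C$. Any $\rho$ with $\rho^*(\T^1) = \delta[C]$ has image $C$ and degree $\delta$, and since $\rho_0$ is birational, $\rho$ factors uniquely as $\rho = \rho_0 \circ \phi$ for a rational self-map $\phi: \P^1 \dashrightarrow \P^1$ of degree $\delta$; conversely every such $\phi$ yields an element of $U_Q$ mapping to $\delta[C]$, and two such $\phi$'s give equivalent elements of $P_Q$ precisely when they differ by right composition with an element of $\mathrm{PGL}_2$. The space $\Phi_\delta$ of degree-$\delta$ self-maps of $\P^1$, realized as the dense open subset of $\P^{2\delta+1}$ parametrizing coprime pairs of binary forms of degree $\delta$, is irreducible of dimension $2\delta + 1$; since the connected group $\mathrm{PGL}_2$ acts on it with trivial generic stabilizer (for $\delta \geq 2$ a generic degree-$\delta$ map has no non-trivial automorphisms, and for $\delta = 1$ the action is transitive on a point), the quotient $\Phi_\delta/\mathrm{PGL}_2$ is irreducible of dimension $2\delta + 1 - 3 = 2\delta - 2$. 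The main obstacle is upgrading the set-theoretic bijection $\Xi^{-1}(\delta[C]) \simeq \Phi_\delta/\mathrm{PGL}_2$ to an isomorphism of varieties; I would handle this by using the rational inverse $\rho_0^{-1}: C \dashrightarrow \P^1$ to recover $\phi = \rho_0^{-1} \circ \rho$ as a regular construction, thereby building an explicit inverse morphism to $\phi \mapsto \rho_0 \circ \phi$.
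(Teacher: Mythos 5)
Your proposal is correct and follows essentially the same route as the paper: the resultant's standard properties give well-definedness and membership in $M_Q^\circ$, surjectivity comes from composing a birational parametrization (L\"uroth) with a degree-$\delta$ self-map via Lemma~\ref{parameters}, and the fiber is identified with $\End(\T^1;\delta)/\Aut(\T^1)$, whose dimension $2\delta-2$ follows from the trivial generic stabilizer of the M\"obius action for $\delta\ge 2$. The only point you flag that the paper also glosses over is upgrading the set-theoretic identification of the fiber to an isomorphism of varieties, so no substantive difference remains.
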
  
  
\begin{proof}  
Recall that for univariate polynomials $p,q$ of degree bounded by  
$D$ and $E$ respectively,  
the  
Sylvester resultant $\Res_{D,E}(p,q)$ vanishes if and only if  
$\gcd(p,q)\neq1$, or $\deg(p)<D$ and $\deg(q)<E$.  
Put  
$$  
R_\bfu (x,y):=\Res_{D,E}( f_\den(t)\, x-f_\num(t),  
g_\den(t)\, y-g_\num(t);t)\in \K[x,y].  
$$  
In our setting we have that  
$\deg_t(f_\den(t)x-f_\num(t))=D$ and $\deg_t(g_\den(t)y-g_\num(t))=E$  
and they are coprime as polynomials in $\K(x,y)[t]$.  
This implies that $R_\bfu\ne0$  
 and moreover $R_\bfu(0,y),R_\bfu (x,0) \ne 0$.  
We can conclude then that $R_\bfu $ is  
is a defining equation for the divisor $\delta[C]$, which is a  
Taylor polynomial neither divisible by $x$ nor by~$y$.  
Hence $\newton(R_\bfu)= Q-(a,b)$ and  
 this is equivalent to  $x^ay^b R_\bfu\in M^\circ_Q$.  
  
This shows that  
Formula~(\ref{eq:2}) defines  
a regular surjective map from $U_Q$ onto $M_Q^\circ$.  
This map is well-defined in the quotient space $P_Q$ because  
it only depends on the push-forward cycle $\rho^*(\T^1)$ and moreover, it  
is regular  
on this quotient space because $P_Q$ can be regarded as a subset of $U_Q$.

\medskip  
For a given divisor $\delta[C]\in M_Q^\circ$ set  
$$  
\Omega_\bfu:=\{\bfu :  
\newton(\rho_\bfu^*(\T^1))=\delta[C]\}\subset U_Q.  
$$  
By L\"uroth theorem~\cite[\S~V.7, pp.~149-151]{Wal50} there exists a  
birational map $\psi:\T^1\dashrightarrow C$, and so $\Omega_\bfu$ is isomorphic to the  
space of birational endomorphism of $\T^1$ of degree $\delta$ through the  
map  
$$  
\End(\T^1;\delta)\to \Omega_\bfu \quad ,\quad  
h\mapsto \psi\circ h,  
$$  
with inverse $\rho\mapsto\psi^{-1}\circ \rho$.  
The group $\Aut(\T^1)$ of M\"obius transformations  
 acts on $\End(\T^1;\delta)$ by right composition and the fiber $\Xi^{-1}(\delta[C])\subset P_Q$ identifies with the set-theoretical quotient  
 $\End(\T^1;\delta)/\Aut(\T^1)$.

In case $\delta=1$ we have that  
$\End(\T^1;1)=\Aut(\T^1)$  
so in this case the quotient is a point.  
For $\delta\ge 2$, we affirm that the orbit of a generic element  
$h\in \End(\T^1;\delta)$ is isomorphic to $\Aut(\T^1)$:  
if we have $h=h\circ \mu$ for $\mu\in \Aut(\T^1)$, the M\"obius transformation  
$\mu$ must fix the set of zeros and poles of $h$.  
This set has at least four generic points of $\T^1$ because $h$ is generic and $\delta\ge 2$, and so  
$\mu=\Id_\T^1$.  
Hence the fiber of  
$$  
\pi:\End(\T^1;\delta) \to \End(\T^1;\delta)/\Aut(\T^1)  
$$  
over a generic point $w$ has dimension 3, and so by the theorem of dimension of fibers  
$$  
\dim\left(\End(\T^1;\delta)/\Aut(\T^1)\right)  
=\dim\left(\End(\T^1;\delta)\right)-\dim\pi^{-1}(w)  
=(2\delta+1)-3,  
$$  
which concludes the proof.  
\end{proof}  
  
From the proof above, we see that the fiber $\Xi^{-1}(\delta[C])$ is in fact isomorphic  
to the quotient space $\End(\T^1;\delta)/\Aut(\T^1)$.

\begin{corollary}\label{delta}  
The set of reduced divisors in  $M_Q^\circ$  
contains a dense open subset.  
In particular,  
$\Xi$ is a bijection between some dense open subsets of $P_Q$ and $M_Q^\circ$.  
\end{corollary}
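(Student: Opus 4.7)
The plan is to combine Corollary~\ref{degreeQ} with the fiber analysis from Proposition~\ref{rational map}. By Corollary~\ref{degreeQ}, there is a dense open subset $V \subset U_Q$ such that $\deg(\rho_\bfu) = 1$ for every $\bfu \in V$; for such $\bfu$ the push-forward cycle $\rho_\bfu^*(\T^1) = [\ov{\rho_\bfu(\T^1)}]$ is reduced. The composition $U_Q \to P_Q \stackrel{\Xi}{\to} M_Q^\circ$ (with $\Xi$ surjective by Proposition~\ref{rational map}) therefore sends $V$ into the locus of reduced divisors inside $M_Q^\circ$, establishing that this locus is non-empty.

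Next, I would observe that $M_Q^\circ$ is irreducible. Indeed, $U_Q$ is an open subset of $\T^2 \times \prod_{i=1}^r \P^{\ell_i}$, hence irreducible; the system-of-representatives description exhibits $P_Q$ as a dense open subset of another product of a torus and projective spaces, so $P_Q$ is also irreducible; and the regular surjection $\Xi$ therefore has an irreducible image. On the other hand, squarefreeness of a Laurent polynomial is a Zariski open condition on $\P^\tJ$ (its failure is cut out by the vanishing of a suitable discriminant), so the set $W$ of reduced divisors inside $M_Q^\circ$ is open. Combined with the non-emptiness established above, irreducibility upgrades $W$ to a dense open subset of $M_Q^\circ$, proving the first assertion.

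For the second assertion, invoke Proposition~\ref{rational map}: the fiber of $\Xi$ over a divisor $\delta[C]$ has dimension $2\delta - 2$, so over the reduced locus $\delta = 1$ this dimension is $0$. Moreover, inspection of the proof of that proposition shows the fiber to be exactly the quotient $\End(\T^1;1)/\Aut(\T^1)$, which reduces to a single point since $\End(\T^1;1) = \Aut(\T^1)$. Hence $\Xi$ restricts to a bijection from $\Xi^{-1}(W)$ onto $W$. By surjectivity of $\Xi$ and irreducibility of $P_Q$, the preimage $\Xi^{-1}(W)$ is itself a dense open subset of $P_Q$: its complement is the preimage of the proper closed set $M_Q^\circ\setminus W$, which cannot fill $P_Q$ since $\Xi$ is onto.

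The only real obstacle here is the openness of the squarefree locus in $M_Q^\circ$; once that standard discriminant argument is granted, density and bijectivity follow formally from the irreducibility of $P_Q$ and the zero-dimensional fiber computation already in hand.
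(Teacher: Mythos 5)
Your argument is correct and follows essentially the same route as the paper, which derives the corollary directly from Corollary~\ref{degreeQ} together with the dominance of $\Xi$ and the fiber computation of Proposition~\ref{rational map}. The extra steps you supply (irreducibility of $M_Q^\circ$, openness of the squarefree locus, and the fiber over a reduced divisor being a single point) are exactly the details the paper leaves implicit.
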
  
  
\medskip  
This is a direct consequence of  
Corollary~\ref{degreeQ} and the fact that the map  
$\Xi:P_Q\to M_Q^\circ$ is  dominant.  
We are now in position to prove  Theorem~\ref{M_Q}.

\medskip  
\begin{Proof}{Proof of Theorem~\ref{M_Q}.}  
Set $\ell:=\sum_{i=1}^r\ell_i= \Card(\partial Q \cap \Z^2)$.  
By Corollary~\ref{delta},  
we have that $\Xi$ is a bijection between some dense open subsets of $P_Q$ and $M_Q^\circ$ and so the  
theorem of dimension of fibers implies that  
$$  
\dim(M_Q)=\dim(P_Q) =  
2+\sum_{i=1}^3(\ell_i-1) + \sum_{i=4}^r\ell_i= \ell-1.  
$$  
In particular  
$M_Q$ is unirational, because $P_Q$ is birational to  
$\T^2\times\prod_{i=1}^3\P^{\ell_i-1}\times\prod_{i=4}^r\P^{\ell_i}$ and {\it a fortiori} to $\A^{\ell-1}$.  
  
\smallskip  
In case $\Char(\K)=0$,  
the field extension  
$  
\Xi^*(\K(M_Q))\subset \K(\A^{\ell-1})  
$  
is separable  
and so  
its degree equals the cardinality of a generic fiber~\cite[\S~II.6.3, pp.~142-145]{Sha94}.  
Hence  
$\Xi^*(\K(M_Q))= \K(\A^{\ell-1})$  
and in particular  $M_Q$ is rational.  
\end{Proof}  
  
\medskip  
Set $c(Q)$ for the {\it content} of $Q$, defined as  
the maximal integer $c\ge1$ such that  
$c^{-1} Q$ is a lattice polygon modulo a translation.  
The polygon $Q$ is {\it non-contractible} whenever $c(Q)=1$.

For $\delta[C]\in M_Q^\circ$ we have  
$$  
Q= \newton(\delta[C])= \delta\newton(C)  
$$  
and so $\delta|c(Q)$ because $\newton(C)$ is a lattice polygon.  
Hence the possible multiplicities of a divisor in $M_Q^\circ$ are exactly the divisors of $c(Q)$.  
In particular, $\Xi:P_Q \to M_Q^\circ$ is a bijection  
if and only if $Q$ is non-contractible.  
  
\smallskip  
Alternatively, the  
content of $Q$  
coincides with the greatest common divisor of the lattice length  
of its edges.  
Hence in the setting of Corollary~\ref{degreeQ},  for $\bfu\in U_Q$ we have  
$\deg(\rho_\bfu)|\gcd(\ell_1, \dots, \ell_r)$.  
In particular,  $\rho_\bfu$ is birational for all $\bfu\in U_Q$  
if and only if $\gcd(\ell_1, \dots, \ell_r)=1$.  
  
\medskip  
The following example shows that in positive characteristic,  
the map $\Xi$ can be generically ramified.

\begin{example}\label{example:5}  
Consider the polygon $Q$ in Figure~\ref{fig:4} below.  
\begin{figure}[htbp]
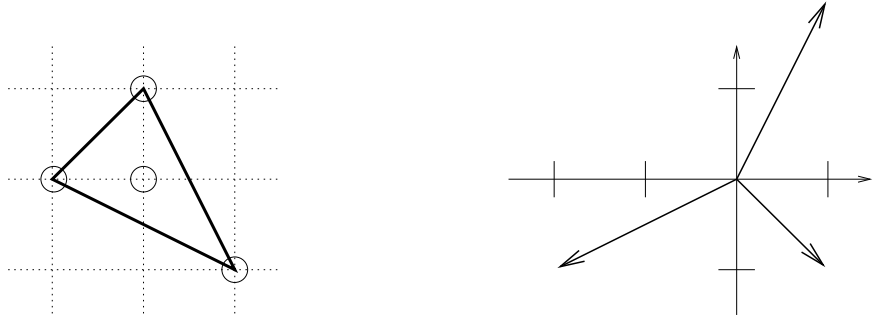
  
\input fano1.pstex_t  
\vspace{-3mm}\caption{  
A reflexive polygon and its primitive inner normals}\label{fig:4}  
\end{figure}  
Its primitive inner normals are the vectors $(1,2),(-2,-1), (1,-1)$.  
This is a non-contractible polygon and so all of the divisors in $M_Q^\circ$ are reduced.  
A parametrization of a rational plane curve  with Newton polygon $Q$  
is equivalent modulo a M\"obius transformation  
to one of the form $\rho=(f,g)$ with  
$$  
f=\alpha t (t-1) \quad ,\quad g=\beta \frac{t^2}{t-1}  
$$  
for some $\alpha,\beta\in \K^\times$.  
Hence $P_Q=\T^2$ and we have that  
$$  
\Res_{2,2}\big( x-\alpha t(t-1), (t-1)y-\beta t^2; t \big) =  
\beta^2 x^2+ 3\alpha\beta xy -\alpha x y^2 + \alpha^2\beta  y.  
$$  
The map $\Xi$ then explicites as  
$$  
\Xi:\T^2\to \P^3 \quad ,\quad  
(\alpha,\beta)\mapsto (\beta^2: 3\alpha\beta  : -\alpha: \alpha^2\beta).  
$$  
Let $z_{x^2}, z_{xy},z_{xy^2}, z_{y}$ denote the homogeneous coordinates of $\P^3$, corresponding to  
the monomials whose exponents are the lattice points of $Q$.  
Then we have that $M_Q$ is a surface in $\P^3$  
with homogeneous defining  
equation  
$$  
z_{xy}^3-27 z_{x^2}\, z_{xy^2}\, z_{y} =0 .  
$$  
If $\Char(\K)\ne 3$, this is a  smooth toric  
surface of degree 3.  
On the other hand, for $\Char(\K)=3$ the map  
$\Xi$ is  generically ramified of degree~$3$.  
The defining equation of $M_Q$ is  
$  
z_{xy}=0  
$,  
hence it is a rational surface, but of degree~$1$.  
\end{example}

In all the examples computed so far, the variety $M_Q$ is always rational independently of the characteristic of the base field~$\K$. It would be interesting  
to determine if this holds in general.  
This does not follow immediately from Lemma~\ref{rational map},  
as in positive characteristic there are examples of  
non-rational surfaces whose field of fractions admits $\K[\A^2]$ as  
a purely inseparable extension~\cite{Shi74}.

\begin{example}\label{example:6}  
Consider the polygon $\wt{Q}$ in the figure below:  
\begin{figure}[htbp]
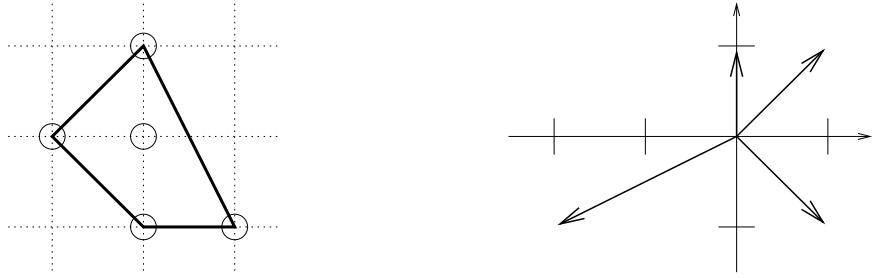
  
\input fano2.pstex_t  
\vspace{-3mm}\caption{  
A second reflexive polygon and its primitive inner normals}\label{fig:5}  
\end{figure}  
  
Its inner normals are the vectors $(1,1),(0,1),(-2,-1),(1,-1)$.  
Hence a parame\-tri\-zation of a rational plane curve with this Newton polygon  
is equivalent modulo a M\"obius transformation  
to one of the form $\rho=(f,g)$ with  
\begin{equation}\label{eq:3}  
f=\alpha t (t-1) \quad ,\quad g=\beta \frac{t(t-v)}{t-1}  
\end{equation}  
for some $\alpha,\beta\in \K^\times$ and $v\in\K\setminus\{0,1\}$.  
Let  
$z_{x^2}, z_{xy}, z_{xy^2}, z_x, z_{y}$ denote the homogeneous coordinates of $\P^3$, corresponding to  
the monomials whose exponents are the lattice points in $\wt{Q}$.  
We have that $P_{\wt{Q}}=\T^2\times (\K\setminus\{0,1\})$. By computing the corresponding resultant,  
 the map $\Xi$ expresses as  
\begin{align*}  
\Xi:&\T^2\times (\K\setminus\{0,1\}) \to \P^4 \\[1mm]  
&(\alpha,\beta,v)\mapsto  
\big(\beta^2: \alpha\beta (3-2v):  -\alpha: \alpha\beta^2v(v-1):  
\alpha^2 \beta  (1-v)\big).  
\end{align*}  
In characteristic $0$, the variety $M_{\wt{Q}}$ is a hypersurface of $\P^4$ of degree $5$ with  
homogeneous defining equation  
$$  
-z_{y}\,z_{x^2}\,z_{xy}^3+z_{x}\,z_{xy}^4-8\,z_{x}^2\,z_{xy^2}\,z_{xy}^2+16\,z_{x}^3\,z_{xy^2}^2-27\,z_{y}^2\,z_{xy^2}\,z_{x^2}^2+36\,z_{x}\,z_{xy^2}\,z_{y}\,z_{x^2}\,z_{xy}=0.  
$$  
Its singular locus contains the surface defined by the equations  
$$  
27\, z_y\, z_{x^2}\, z_{xy^2} +2z_{x,y}^3=0  
\quad ,\quad  
12\, z_x\, z_{xy^2}+z_{xy}^2  
$$  
together with two lines and a conic.  
\end{example}  
  
\smallskip  
For a given polygon $Q$, the map $\Xi$ is regular in principle only on $U_Q$. It might be interesting  
to study what happens when we get to the border of $U_Q$ and more generally, to make explicit the  
Zariski closure of $M_Q$ in geometric or combinatorial terms.  
\par  
In this direction, the behavior of $\Xi$ when a root of  $p_i$ concides with one of $p_j$ in~(\ref{eq:1})  
for some $i\ne j$ has a nice combinatorial description.  
For instance, if we set $v=0$ in~(\ref{eq:3}), we obtain the  
parametrization in Example~\ref{example:5}. This gives a natural inclusion  
$$  
M_Q\hookrightarrow M_{\wt Q}  
$$  
which corresponds to a shrinking of the polygon $\wt Q$  
into the polygon $Q$ from Example~\ref{example:5},  
as shown in Figure~\ref{fig:6}.  
  
\vspace{-2mm}  
\begin{figure}[htbp]
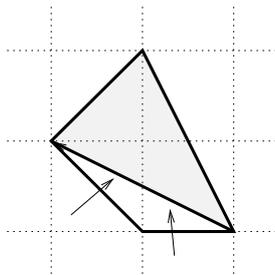
  
\input shrinking.pstex_t  
\vspace{-3mm}\caption{  
Polygon $\wt Q$ shrinks into polygon $Q$}\label{fig:6}  
\end{figure}  
  
This can be explained by the merging of the primitive inner normals $(1,1)$  
and $(0,1)$ into its sum $(1,2)$.  
These observations can be extended to other cases when  
a root of $p_i$ coincides with one of $p_j$ for  $i\ne j$,  
but does not seem to suffice to explain all of the points in  
$M_Q\setminus M_Q^\circ$.  
  
\medskip  
The following result follows immediately from  
Theorem~\ref{M_Q}.  
  
\begin{corollary}  
Let $Q\subset\R^2$ be a non degenerate convex lattice polygon, then  
$M_Q=\P^\tJ$ if and only if $Q$ has no interior points, and  
$M_Q$ is a hypersurface if and only if $Q$ has exactly one interior point.  
\end{corollary}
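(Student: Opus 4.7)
The plan is to read the statement as a dimension calculation and to apply Theorem~\ref{M_Q} directly. Recall that $M_Q$ is defined as the Zariski closure in $\P^\tJ$ of the set $M_Q^\circ$, with $\tJ=\Card(Q\cap\Z^2)-1$. By Theorem~\ref{M_Q}, the variety $M_Q$ is irreducible (being unirational) and of dimension
\[
\dim(M_Q)=\Card(\partial Q\cap\Z^2)-1.
\]
The codimension of $M_Q$ inside $\P^\tJ$ is therefore
\[
\tJ-\dim(M_Q)=\Card(Q\cap\Z^2)-\Card(\partial Q\cap \Z^2)=\Card(Q^\circ\cap\Z^2),
\]
the number of interior lattice points of $Q$.

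First I would handle the case $M_Q=\P^\tJ$. Since $M_Q$ is a closed irreducible subvariety of the irreducible variety $\P^\tJ$, the equality holds if and only if $\dim(M_Q)=\tJ$. By the displayed identity, this is equivalent to $\Card(Q^\circ\cap\Z^2)=0$, i.e.\ $Q$ has no interior lattice points.

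Next I would handle the hypersurface case. A closed irreducible subset of $\P^\tJ$ is a hypersurface precisely when it has codimension one, so $M_Q$ is a hypersurface if and only if $\dim(M_Q)=\tJ-1$, which by the same identity amounts to $\Card(Q^\circ\cap\Z^2)=1$.

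There is essentially no obstacle here: the whole corollary is a bookkeeping consequence of the dimension formula in Theorem~\ref{M_Q} together with the standard fact that irreducible subvarieties of $\P^\tJ$ of maximal (resp.\ codimension one) dimension exhaust $\P^\tJ$ (resp.\ are hypersurfaces). The only point worth mentioning explicitly is the irreducibility of $M_Q$, which is already provided by Theorem~\ref{M_Q}. No additional combinatorics beyond the elementary decomposition $\Card(Q\cap\Z^2)=\Card(\partial Q\cap \Z^2)+\Card(Q^\circ\cap\Z^2)$ is needed.
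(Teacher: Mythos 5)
Your argument is correct and is precisely the computation the paper has in mind: the authors state that the corollary "follows immediately from Theorem~\ref{M_Q}", and the intended reasoning is exactly your codimension bookkeeping $\tJ-\dim(M_Q)=\Card(Q^\circ\cap\Z^2)$ together with the irreducibility of $M_Q$ supplied by unirationality. No discrepancy with the paper's approach.
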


The first part of this corollary is also a consequence of Baker's formula  
for the geometric  
genus of a curve in a toric surface~\cite{Bak1893,Kho78,BS07}, as this formula implies that the geometric genus of a  
 generic plane curve with Newton polygon  
$Q$ equals  $\Card(Q^\circ\cap \Z^2)$.  
  
Convex lattice polygons without interior points can be easily classified.  
There is an infinite number of them, and  
modulo lattice equivalence they are either  
of the form $\Conv\big((0,0),(0,2),(2,0)\big)$ or  
$$  
\Conv\big((0,0),(0,1),(a,1),(b,0)\big)  
$$  
with $ a\ge 0$ and $ b\ge 1 $ such that $a\le b$.  
  
Polygons with exactly one interior point  
have nice properties and have been extensively studied.  
A polygon has exactly one interior point if and only if  
it is {\it reflexive} in the sense that  
its polar polygon is again a lattice polygon.  
Reflexive polygons are important because they  exactly  correspond  
to (non necessarily smooth) Fano toric surfaces.  
There are~$16$ reflexive polygons modulo  
lattice equivalence and pictures of them can be found for instance  
in~\cite{PV00}.  
We used three of these polygons in our examples~\ref{example:1},  
\ref{example:5} and~\ref{example:6}.  
  
\bigskip  
A further natural question in this context is whether for a rational plane curve  
with given  
polygon~$Q$, all of the monomials associated to the lattice points in $Q$ appear.  
In general, the answer is ``no'' even for the generic member of $M_Q^\circ$.  
In characteristic~$3$,  
Example~\ref{example:5} actually shows that an equation  
with that Newton polygon defines a rational curve if and only if the  
coefficient of the monomial $xy$ is zero.  
It would be interesting to determine whether one can produce a similar example in characteristic~$0$.  
  
Note that the fact that the equation of the generic  
member of $M_Q$  
does not depend on the monomial  
$x^ay^b$  for a given $(a,b)\in Q$  
is equivalent to say that $M_Q$ is contained in the standard hyperplane  
$V(z_{x^ay^b})$ of $\P^\tJ$.  
  
It would also be interesting to determine the degree  
of~$M_Q$ as a projective variety and its singular locus.  
  
\bigskip  
Finally we prove Theorem~\ref{polly} on the shape of the Newton polygon of  
plane curves parameterized by polynomials or by Laurent polynomials.  
  
\medskip  
\begin{Proof}{Proof of Theorem~\ref{polly}.}  
Let $Q$ be a non-degenerate convex lattice polygon such that  
$Q= \newton\big(\ov{\rho(\T^1)}\big)$ for  
 $\rho\in \K[t]^2$.  
Setting $\delta:=\deg(\rho)$ we have $\delta Q= \newton(\rho^*(\T^1))$ and this holds  
if and only if $\rho$ corresponds to a point  
 $\bfu=(\alpha,\beta,p_1,\dots, p_r)\in U_{\delta Q}$  
with $p_i(t)=1$ whenever $(m_i,n_i)\notin\N^2$.  
Let $i_0$ be any of such indexes. We have then $V(p_{i_0}^\h)=\{\infty\}$, and there  
is at most one of these indexes with this property,  because the polynomials  
$p_1^\h, \dots, p_r^\h$ are pairwise coprime.  
\par  
On the other hand, not all of the $(m_i,n_i)$'s  can lie in $\N^2$ because of the  
balancing condition and so there is  
exactly one such index. We conclude that $(\R_{>0})(m_{i_0},n_{i_0})$  
is the only inner normal direction  
of $Q$  
which does not lie in $(\R_{\ge 0})^2$.  
  
Conversely, assume that $Q$ has exactly  
one inner normal direction not lying in  $(\R_{\ge 0})^2$.  
Then all of the  
points  $\bfu=(\alpha,\beta,p_1,\dots, p_r)  
\in U_Q$ such that  
 $p_{i_0}(t)=1$  
for the index $i_0$ corresponding to that inner direction, give  
parametrizations $\rho_\bfu$ such that $\newton(\rho_\bfu^*(\T^1))=Q$, By Proposition~\ref{grado generico}  
the map $\rho_\bfu$ is birational for $\bfu $ generic within the considered  
points and hence  
$Q= \newton\big(\ov{\rho_\bfu(\T^1)}\big)$ for such $\bfu$'s.  
  
\medskip  
Similarly,  
$Q= \newton\big(\ov{\rho(\T^1)}\big)$ for  
 $\rho\in \K[t^{\pm1}]^2$  
if and only if $\rho$ corresponds to a point  
 $\bfu=(\alpha,\beta,p_1,\dots, p_r)\in U_{\delta Q}$  
such that $p_i(t)=t^{a(i)}$ for $a(i)\in \N$ whenever $(m_i,n_i)\notin\N^2$.  
For such an  index $i_0$ we have that $V(p_{i_0}^\h)\subset \{0, \infty\}$ and so there are at most  
{\it two} such indexes. As before, the balancing condition shows that there must be at least one such index.  
We conclude that in this case  
there are one or two  inner normal directions of $Q$  
which do not lie in $(\R_{\ge 0})^2$. The converse is proved with the same strategy of the previous case.  
\end{Proof}

\subsection{The degenerate case}\label{degenerate}  
  
We consider separately the case when $Q$ is a lattice segment, because  
the corresponding results are slightly different.  
If this is the case, modulo a translation we have  
$$  
Q= \Conv\big((0,0), (ka, kb)\big) \subset\R^2  
$$  
for a primitive vector $(a,b)\in \Z^2$ and $k:=\ell(Q)\ge 1$.  
The general Laurent polynomial with Newton polygon $Q$ is of the form  
$ F(x,y)=p(x^ay^b)$ for a polynomial  
$p(s)=p_0+p_1s+\cdots+p_ks^k$ such that $p_0,p_k\ne0$.  
Hence  
$$  
V(F)= \bigcup_{\xi\in V(p)} V(x^ay^b-\xi).  
$$  
Each component curve $V(x^ay^b-\xi)$ is rational,  
 as it  
can be parameterized by a map  
$\T^1\to \T^2, t\mapsto(\xi^c t^b, \xi^dt^{-b}) $ for integers $c,d$ such that $ca+db=1$.  
Hence  
$F\in M_Q^\circ$ if and only if $p$ has only one root in $\T^1$,  
or equivalently if and only if  
$  
F(x,y)=\nu(x^ay^b-\xi)^k$  
for some $\nu,\xi\in \K^\times$.  
We deduce that  
$$  
M_Q^\circ=\Big\{ \Big({k\choose j} t^j : 0\le j\le k\Big): t\in \T^1\Big\}\subset\P^k,  
$$  
which shows that $M_Q$ is linearly isomorphic to the Veronese curve  
in case $\Char(\K)=0$ or $>k$, and linearly isomorphic to a projection of this Veronese curve, in any characteristic.  
Thus the following is the analog of Theorem~\ref{M_Q} in our setting.  
  
\begin{proposition}\label{M_Q_segment}  
Let $Q\subset \R^2$ be a lattice segment,  
then  
$M_Q$  
is a rational curve.  
\end{proposition}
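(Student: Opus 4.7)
The plan is to exploit the explicit description of $M_Q^\circ$ already sketched in the discussion preceding the statement, and then argue that the resulting locus is (linearly isomorphic to) a Veronese curve, possibly composed with a coordinate projection.

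After a lattice translation I would assume $Q=\Conv((0,0),(ka,kb))$ with $(a,b)\in\Z^2$ primitive and $k=\ell(Q)\ge1$. The first step is structural: any Laurent polynomial with support in $Q$ factors through the monomial map $\chi\colon\T^2\to\T^1$, $(x,y)\mapsto x^ay^b$, because such a polynomial can be written as $F(x,y)=p(x^ay^b)$ for a univariate $p$ of degree $\le k$. Primitivity of $(a,b)$ guarantees that $\chi$ is surjective with irreducible fibers: choosing $c,d\in\Z$ with $ca+db=1$, the fiber over $\xi\in\T^1$ is parameterized by $t\mapsto(\xi^ct^b,\xi^dt^{-a})$ and is thus a rational curve in $\T^2$. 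Consequently $V(F)=\bigcup_{\xi\in V(p)\cap\T^1}V(x^ay^b-\xi)$ is the irreducible decomposition of $V(F)$ into rational components (assuming $p_0,p_k\ne0$, equivalently $\newton(F)=Q$).

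The second step extracts $M_Q^\circ$ from this decomposition. The divisor defined by $F$ belongs to $M_Q^\circ$ exactly when it is a positive multiple of a single rational curve, which forces $p$ to have a unique root $\xi\in\T^1$ of multiplicity $k$; hence $F=\nu(x^ay^b-\xi)^k$ for some $\nu,\xi\in\K^\times$. Expanding the $k$-th power and identifying the space of Laurent polynomials with support in $Q$ with $\P^k$ yields
$$
M_Q^\circ=\Big\{\Big(\tbinom{k}{j}(-\xi)^{k-j}:0\le j\le k\Big):\xi\in\T^1\Big\}\subset\P^k,
$$
after absorbing $\nu$ into the projective class; up to relabelling $t=-\xi^{-1}$ (or similar) this is the image of the map displayed in the excerpt.

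The final step is immediate: up to the diagonal linear change of coordinates scaling the $j$-th projective coordinate by $\binom{k}{j}$ (when this is nonzero), the map $t\mapsto(1:t:\cdots:t^k)$ is the standard rational normal curve of degree~$k$, so its Zariski closure is rational. In characteristic $0$ or greater than $k$ all binomial coefficients are nonzero and we obtain a closed embedding onto a Veronese curve; in the remaining positive characteristics some coordinates vanish identically and $M_Q$ is realized as a linear projection of the Veronese, which is still a rational curve. I do not anticipate a serious obstacle: the only point requiring care is checking that $V(x^ay^b-\xi)$ is irreducible and that distinct $\xi$ give distinct components, both of which follow from the primitivity of $(a,b)$ through the fibration structure of $\chi$.
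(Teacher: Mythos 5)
Your argument is correct and follows essentially the same route as the paper: reduce to $F=p(x^ay^b)$, observe that the components $V(x^ay^b-\xi)$ are rational via the primitivity of $(a,b)$, conclude that $M_Q^\circ$ consists of the $k$-th powers $\nu(x^ay^b-\xi)^k$, and identify the resulting locus in $\P^k$ with a Veronese curve (or a linear projection thereof in small positive characteristic). No gaps; this matches the paper's treatment of the degenerate case.
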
  
  
Note that this dimension~$1$ of $M_Q$  
can be interpreted as the number of points in the {\it relative} boundary  
of $Q$ minus $1$.  
  
\medskip  
On the other hand, the multiplicity of all of the  divisors in  $M_Q^\circ$ equals~$\ell(Q)$, as they are of the form $  
F(x,y)=\nu(x^ay^b-\xi)^k$  
for some $\nu,\xi\in \K^\times$.  
Hence the lattice segment~$Q$ realizes as the Newton polygon of a rational plane  
curve if and only if $\ell(Q)=1$.  
  
\bigskip  
\bibliographystyle{plain}

\begin{thebibliography}{Bak1893}  
\bibitem[Bak1893]{Bak1893}  
H.F. Baker,  
\newblock {\it Examples of applications of Newton's polygon to the theory of  
singular points of algebraic functions.\/}  
Trans. Cambridge Phil. Soc. {\bf 15} (1893) 403-450.  
  
\bibitem[BS07]{BS07}  
T. Beck, J. Schicho,  
{\it Parametrization of algebraic curves defined by sparse equations.\/}  
Appl. Algebra Engrg. Comm. Comput. {\bf 18} (2007) 127-150.  
  
\bibitem[Ber75]{Ber75}  
D.N. Bern\v{s}tein,  
\newblock {\it The number of roots of a system of equations (in Russian).\/}  
Funk. Anal. Priloz.~{\bf 9\/} (1975) 1-4;  
English translation in Functional Anal. Appl. {\bf 9} (1975) 183-185.  
  
\bibitem[DFS07]{DFS07}  
A. Dickenstein, E.-M. Feichtner, B. Sturmfels,  
\newblock {\it Tropical discriminants.\/}  
\newblock J. Amer. Math. Soc.~{\bf 20} (2007) 1111-1133.  
  
\bibitem[EKP10]{EKP10}  
I. Emiris, C. Konaxis, L.  Palios, 
\newblock{\it Computing the Newton polytope of  the implicit equation.\/}  
\newblock  Math. Comput. Sci, this volume.  
  
\bibitem[EK03]{EK03}  
I. Emiris, I. Kotsireas,  
\newblock{\it Implicitization with polynomial support optimized for sparseness.\/}  
\newblock  
Lecture Notes in Comput. Sci. {\bf 2669} (2003) 397-406, Springer.  
  
\bibitem[EK05]{EK05}  
I. Emiris, I. Kotsireas,  
\newblock{\it Implicitization exploiting sparseness.\/}  
\newblock  
DIMACS Ser. Discrete Math. Theoret. Comput. Sci. {\bf 67} (2005) 281-298,  
Amer. Math. Soc..  
  
\bibitem[EKho07]{EKho07}  
A. Esterov, A. Khovanski\u\i,  
\newblock{\it Elimination theory and Newton polytopes.\/}  
\newblock  Funct. Anal. Other Math.~{\bf 2}  (2008),  no. 1, 45--71. 
  
  
\bibitem[Fra06]{Fra06}  
M. Franz, {\tt Convex: a Maple package for convex geometry, version 1.1.\/}  
\newblock Available at {\tt http://www-fourier.ujf-grenoble.fr/$\sim$franz/convex/}.  
  
  
\bibitem[Jel05]{Jel05}  
Z. Jelonek,  
\newblock{\it On the effective Nullstellensatz.\/}  
\newblock  Invent. Math.  {\bf 162}  (2005) 1-17.  
  
\bibitem[Kho78]{Kho78}  
A. Khovanski\u\i,  
\newblock{\it Newton polyhedra, and the genus of complete intersections.\/}  
\newblock   Funk. Anal. i Priloz. {\bf 12} (1978) 51-61;  
English translation in Functional Anal. Appl. {\bf 12} (1978) 38-46.  
  
\bibitem[McM04]{McM04}  
P. McMullen,  
\newblock{\it Mixed fibre polytopes.\/}  
\newblock Discrete Comput. Geom. {\bf 32} (2004) 521-532.  
  
  
\bibitem[PS04]{PS04}  
P. Philippon, M. Sombra,  
\newblock {\it Hauteur normalis{\'e}e des vari{\'e}t{\'e}s  
toriques projectives.\/}  
\newblock Journal of the Institute of Mathematics of Jussieu (2008), ~{\bf 7}:2:327--373. 
  
\bibitem[PS07a]{PS07a}  
P. Philippon, M. Sombra,  
\newblock{\it Une nouvelle majoration pour le nombre de solutions d'un  
syst\`eme d'\'equations polynomiales.\/}  
C. R. Acad. Sci. Paris {\bf 345} (2007) 335-340.  
  
\bibitem[PS07b]{PS07}  
P. Philippon, M. Sombra,  
\newblock{\it A refinement of the Ku\v{s}nirenko-Bern\v{s}tein estimate.\/}  
\newblock  Adv. Math.  218  (2008),  ~{\bf 5}, 1370--1418.
  
\bibitem[PV00]{PV00}  
B. Poonen, F. Rodr\'iguez-Villegas,  
{\it Lattice polygons and the number 12.\/}  
Amer. Math. Monthly {\bf 107} (2000) 238-250.  
  
\bibitem[Sch93]{Sch93}  
{R. Schneider,}  
\newblock {\sl  Convex bodies: the Brunn-Minkowski theory.\/}  
\newblock Cambridge Univ. Press, 1993.  
  
\bibitem[SW01]{SW01}  
J.R. Sendra, F. Winkler,  
\newblock{\it Tracing index of rational curve parametrizations.\/}  
\newblock  Comput. Aided Geom. Design  {\bf 18}  (2001) 771-795.  
  
  
\bibitem[Sha94]{Sha94}  
I.R. Shafarevich,  
\newblock {\sl Basic algebraic geometry. 1.  
Varieties in projective space. Second edition.\/}  
Springer, 1994.  
  
\bibitem[Shi74]{Shi74}  
T. Shioda,  
{\it An example of unirational surfaces in characteristic $p$.\/}  
Math. Annalen {\bf 211} (1974) 233-236.  
  
\bibitem[ST07]{ST07}  
B. Sturmfels, J. Tevelev,  
\newblock {\it Elimination theory for tropical varieties.\/}  
\newblock  Math. Res. Lett.  ~{\bf 15}  (2008),  no. 3, 543--562.
  
  
\bibitem[STY07]{STY07}  
B. Sturmfels, J. Tevelev, J.  Yu,  
\newblock {\it The Newton polytope of the implicit equation.\/}  
\newblock Moscow Math. J.  {\bf7}  (2007) 327-346.  
  
\bibitem[SY07]{SY07}  
B. Sturmfels, J.  Yu,  
\newblock{\it Tropical implicitization and mixed fiber polytopes.\/}  
\newblock To appear in  
{\sl Software for Algebraic Geometry},  
IMA Volumes in Mathematics and its Applications, Springer.  
\newblock E-print {\tt arxiv:0706.0564}, 20~pp..  
  
  
\bibitem[SY94]{SY94}  
B. Sturmfels,  J.-T. Yu,  
\newblock{\it Minimal polynomials and sparse resultants.\/}  
\newblock In {\sl Zero-dimensional schemes (Ravello, 1992)}  
317-324, De Gruyter, 1994.  
  
\bibitem[Tev07]{Tev07}  
J. Tevelev,  
\newblock{\it Compactifications of subvarieties of tori.\/}  
Amer. J. Math. {\bf 129} (2007) 1087-1104.  
  
\bibitem[Wal50]{Wal50}  
R.J. Walker,  
\newblock {\sl Algebraic Curves.\/}  
\newblock Princeton Univ. Press, 1950.  
\end{thebibliography}

\end{document}